\documentclass[11pt,reqno]{amsart}
\usepackage{amssymb,mathrsfs,color}
\usepackage{pinlabel}

\usepackage{amsmath, amsfonts, amsthm, verbatim}
\usepackage{epstopdf, mathtools}
\mathtoolsset{showonlyrefs}
\usepackage{color}
\usepackage{esint}
\usepackage{stmaryrd}

\usepackage{empheq}
\usepackage[shortlabels]{enumitem}
\usepackage{appendix}

\newcommand{\rar}{\rightarrow}

\def\grad {{\nabla}}

\DeclareMathOperator{\Sym}{Sym}


\newcommand{\bs}[1]{\boldsymbol{#1}}

\renewcommand{\div}{\operatorname{div}}





\definecolor{deepgreen}{cmyk}{1,0,1,0.5}










\newcommand{\p}{\partial}

\makeatletter

\newcommand{\Rmnum}[1]{\expandafter\@slowromancap\romannumeral #1@}
\makeatother


\newcommand{\ang}[1]{\left\langle{#1}\right\rangle}


\newcommand{\Del}[1]{}

\numberwithin{equation}{section}

\newtheorem{thm}{Theorem}[section]

\newtheorem{lem}[thm]{Lemma}

\theoremstyle{remark}

\newtheorem{defn}[thm]{Definition}





\usepackage{tensor}

\usepackage{graphicx}
\usepackage{xcolor} 
\usepackage{tensor}
\usepackage{slashed}
\usepackage{cite}
\definecolor{green}{rgb}{0,0.8,0} 




\newcommand{\diam}[1]{\overset{\diamond}{#1}}

\newcommand{\tr}{\textrm{tr}}


\newcommand{\eps}{\epsilon}



\newcommand{\bbN}{\mathbb N}

\newcommand{\bbQ}{\mathbb Q}
\newcommand{\bbR}{\mathbb R}

\newcommand{\bbT}{\mathbb T}

\newcommand{\bbZ}{\mathbb Z}




\vfuzz2pt 
\hfuzz2pt 


\begin{document}

\title[Two-dimensional rate-type flows with stress diffusion]{Global well-posedness for two-dimensional flows of viscoelastic rate-type fluids with stress diffusion}

\author{Miroslav Bul\'\i{}\v{c}ek, Josef M\'{a}lek, Casey Rodriguez}

\begin{abstract}
We consider the system of partial differential equations governing two-dimensional flows of a robust class of viscoelastic rate-type fluids with stress diffusion, involving a general objective derivative. 
The studied system generalizes the incompressible Navier--Stokes equations for the fluid velocity $\bs v$ and pressure $p$ by the presence of an additional term in the constitutive equation for the Cauchy stress expressed in terms of a positive definite tensor $\bs B$. The tensor $\bs B$ evolves according to a diffusive variant of an equation that can be viewed as a combination of corresponding counterparts of Oldroyd-B and Giesekus models. Considering spatially periodic problem, we prove that for arbitrary initial data and forcing in appropriate $L^2$ spaces, there exists a unique globally defined weak solution to the equations of motion, and more regular initial data and forcing launch a more regular solution with $\bs B$ positive definite everywhere.
\end{abstract}
\maketitle

\centerline{\emph{ In memory of Anton\'\i{}n Novotn\'{y}}}
\section{Introduction}

\subsection{Viscoelastic rate-type fluids with stress diffusion}

Many real fluids are not Newtonian as they cannot by accurately modeled by the standard Navier--Stokes equations. One family of models frequently encountered in the theory of such \emph{non-Newtonian fluids} is the family of incompressible viscoelastic rate-type models with stress diffusion, as these fluids are capable of describing (non-Newtonian) phenomena such as stress relaxation, nonlinear creep, normal stress differences, shear or vorticity banding. For these models, the governing equations of motion generalize the Navier--Stokes equations for the fluid velocity $\bs v$ (and the mean normal stress $p$) in the sense that the constitutive equation for the Cauchy stress contains additional terms expressed in terms of a positive definite second order tensor $\bs B$ modeling the ``elastic" energy storage mechanism of the fluid. The evolution of $\bs B$ is then described by an additional diffusive equation containing the objective time derivative of $\bs B$ having the form of a nonlinear operator involving the velocity gradient.
We refer the reader to \cite{el-kareh_leal} and \cite{MalekPrusaSkrivanSuli18} (and the references cited therein) for an introduction to the mechanics of viscoelastic rate-type fluids with stress diffusion and their applications.

In a recent study \cite{BBJ21}, the authors presented two results of apparently different nature. First of all, using the principles of continuum thermodynamics, they developed a robust class of viscoelastic-rate type models with stress diffusion that, for small elastic responses, coincide with the combination of Oldroyd-B and Giesekus models with stress diffusion, being in the form of the Laplace operator. Robustness of the developed models consists in considering a very general form of the nonlinear objective tensorial time derivative that includes the upper convected Maxwell, Jaumann--Zaremba and Gordon--Schowalter derivatives as special cases. Second, for such robust class of incompressible fluids, they established long-time and large-data existence of weak solution to three-dimensional flows in a closed bounded domains. Thus, the mathematical foundation established by Leray and his followers for the three-dimensional Navier--Stokes equations (see \cite{leray.j:sur,Hopf1950, temam.r:navier-stokes,constantin.p.foias.c:navier-stokes}) is, due to \cite{BBJ21}, available for large class of complex fluids as well.

In this paper, we focus on two-dimensional flows for the same class of fluids (i.e. those studied in \cite{BBJ21}) asking the question of existence, uniqueness and full regularity of weak solution for any regular enough data. We give the affirmative answer to this question. For brevity, we investigate the initial, spatially periodic problem. The whole problem can be then formulated in the following way.

Let $\bbT^2 = \bbR^2 / \bbZ^2$ denote a two-dimensional torus. Let $a \in \bbR$, $\beta \in (0,1)$ and $\delta_1, \delta_2 \geq 0$. Then, for $(t,x) \in [0,T] \times \bbT^2$ and for given $\bs v_0: \bbT^2 \to \bbR^2$, $\bs B_0:\bbT^2 \to \bbR^{2\times 2}$ and $\bs f:[0,T]\times \bbT^2 \to  \bbR^2$, we look for functions $\bs v$, $p$ and $\bs B$ satisfying
    \begin{align}
	&\div \bs v = 0, \label{eq:divergence}\\
	&\p_t \bs v + (\bs v \cdot \nabla) \bs v = \div \bs T +  \bs f, \label{eq:momentum}\\
	&\qquad \textrm{with } \, \bs T = -p \bs I + 2 \bs D(\bs v) + 2a[ (1 - \beta)(\bs B - \bs I) + \beta(\bs B^2 - \bs B)], \label{eq:Cauchy}\\
	&\diam{\bs B} + \delta_1 (\bs B - \bs I) + \delta_2 (\bs B^2 - \bs B) = \Delta \bs B, \label{eq:tensb}\\
	&\bs{v}(0,x) = \bs{v}_0(x), \,\, \bs{B}(0,x) = \bs{B}_0(x),\label{eq:initial}
	\end{align}
where, for any tensor $\bs A$, the symbol $\diam{\bs A}$ denotes an objective time derivative\footnote{The additional terms $- a (\bs D(\bs v)\bs B + \bs B \bs D(\bs v) ) -
(\bs W(\bs v) \bs B - \bs B \bs W(\bs v) )$ in the definition of the objective time derivative guarantee that $\diam{\bs B}$ satisfies the principle of material frame-indifference (see \cite{NLFTM} for a discussion of this principle in mechanics).} of the symmetric, positive definite second order tensor field $\bs A$ given by
\begin{align}
	\diam{\bs A} = \p_t \bs A + (\bs v \cdot \nabla) \bs A - a (\bs D(\bs v)\bs A + \bs A \bs D(\bs v) ) -
	(\bs W(\bs v) \bs A - \bs A \bs W(\bs v) ). \label{eq:obj_derivative}
\end{align}
The symmetric and skew parts $\bs D(\bs v)$ and $\bs W(\bs v)$ of the velocity gradient $\grad \bs v$ are defined as
\begin{align*}
\bs D(\bs v) = \frac{1}{2}[(\grad \bs v) + (\grad \bs v)^T] \quad \textrm{ and } \quad \bs W(\bs v) = \frac{1}{2}[(\grad \bs v) - (\grad \bs v)^T].
\end{align*}
Equation \eqref{eq:divergence} expresses the incompressiblity constraint, equation \eqref{eq:momentum} expresses the balance of linear momentum where the constant density is set to be one for simplicity, equation \eqref{eq:Cauchy} is the constitutive equation for the Cauchy stress $\bs T$ where, again for simplicity, we set the viscosity to be one, and equation \eqref{eq:tensb} describes the evolution of the ``elastic'' part of the overall deformation of the fluid (see \cite{RAJAGOPAL2000} and \cite{MalekPrusaChapter} for more details providing clear physical interpretation of $\bs B$ associated with the concept of evolving natural configuration). The last equation \eqref{eq:initial}  characterizes the intial state of the system.

As usual, for spatially periodic problem associated with the equations of the Navier--Stokes type (see \cite{Temam85}), we assume that
$$
\bs v_0, \, \bs f(t, \cdot), \, \bs v (t, \cdot) \textrm{ and } p(t, \cdot) \textrm{ have zero mean value over } \bbT^2.
$$

When $\bs B = \bs I$, then the governing equations reduce to the standard incompressible Navier--Stokes equations. Note that the Navier--Stokes equations are decoupled from the equation \eqref{eq:tensb} for $\bs B$ if we set $a = 0$ in the formula for the Cauchy stress \eqref{eq:Cauchy}. Although usually the parameter $a$ does not occur in the formula for the Cauchy stress, Larson \cite{Larson88} (see pages 131--133) provided arguments why the parameter $a$ should be there. The analysis presented in \cite{BBJ21} and in this study holds for all $a\in\bbR$. If the reader would prefer to consider the model without the presence of $a$ in the form for the Cauchy stress, then the results proved in \cite{BBJ21} and herein hold for $a>0$. It is also worth mentioning that the limiting cases of the model we consider include diffusive variants of the following:
\begin{itemize}
	\item Oldroyd-B model \cite{Oldroyd} ($a = 1$, $\beta = 0$, $\delta_1 > 0$, $\delta_2 = 0$),
	\item Giesekus model \cite{GIESEKUS1982} ($a = 1$, $\beta = 0$, $\delta_1 = 0$, $\delta_2 > 0$),
	\item Johnson-Segalman model \cite{JOHNSON1977} ($a \in [-1,1]$, $\beta = 0$).
\end{itemize}
The results proved in \cite{BBJ21} and herein require that $\beta\in (0,1)$. This essential assumption is linked with a modification of the constitutive equation for the Helmholtz free energy (see \eqref{eq:helm} below). For small elastic strains (i.e. when $|\bs B - \bs I|$ is small) the elastic responses of classical models (with $\beta =0$) and the elastic responses of the models considered here
coincide, see \cite{BBJ21} for details.

Note that the diffusive Johnson-Segalman model (and consequently also its generalization) is perceived to be the model
of the \emph{shear-banding phenomenon}, which is a phenomenon of eminent interest in the applications; see for example \cite{OlmstedRadulescu2000,olmsted.pd:perspectives,fardin.m.radulescu.o.ea:stress,divoux.t.fardin.ma.ea:shear}.

A basic and essential feature for the specification of the concept of weak solution is the energy balance for the system of governing equations that are being considered. In our case, for sufficiently regular solutions to \eqref{eq:divergence}--\eqref{eq:initial}, the energy balance takes the form  \begin{align}
	\frac{d}{dt} \int_{\bbT^2} \frac{1}{2}|\bs v|^2 dx + \frac{d}{dt} \int_{\bbT^2} \psi(\bs B)\, dx =
	-\int_{\bbT^2} \xi \, dx + \int_{\bbT^2} \bs f \cdot \bs v \, dx \label{eq:energybalance}
\end{align}
where the \emph{Helmholtz free energy} $\psi$ is
\begin{align}
	\psi(\bs B) = (1-\beta)(\tr \bs B - 2 - \log \det \bs B) + \frac{\beta}{{2}} |\bs B - \bs I|^2 \label{eq:helm}
\end{align}
and non-negative \emph{dissipation rate} $\xi$ is
\begin{align}
 \xi &=   2|\bs D(\bs v)|^2 + (1-\beta) | \bs B^{-1/2} \nabla \bs B \bs B^{-1/2} |^2
 + \beta | \nabla \bs B |^2  \\
 &\quad+ (1-\beta) \delta_1 | \bs B^{1/2} - \bs B^{-1/2} |^2
 +
  \beta \delta_2 | \bs B^{3/2} - \bs B^{1/2} |^2 \\
 &\quad +  (\beta \delta_1 + (1-\beta) \delta_2) | \bs B - \bs I |^2. \label{eq:diss}
\end{align}
This can be seen by summing \eqref{eq:momentum} scalarly multiplied by $\bs v$ and \eqref{eq:tensb} scalarly multiplied by $\bs J = \p_{\bs B} \psi(\bs B) = (1-\beta)(1 - \bs B^{-1}) + \beta (\bs B - \bs I)$ followed by the integration over $\bbT^2$ and integration by parts, using \eqref{eq:divergence}. See \cite{BBJ21} for details. Note that for the Navier--Stokes equations, when $\bs B = \bs I$, $\psi = 0$ and $\xi$ contains only the first term in \eqref{eq:diss}. For $\bs B\neq\bs I$, the structure of energy storing and dissipative mechanisms is obviously very complex and nontrivial. Also, by comparing the models with $\beta = 0$ to our model with $\beta \in (0,1)$, we observe that the models with $\beta \in (0,1)$ has an attractive mathematical feature that the total energy $\int_{\bbT^2} \frac{1}{2} |\bs v|^2 dx + \int_{\bbT^2} \psi(\bs B) dx$ and dissipation rate $\xi$ are coercive in norms of $\bs v$ and $\bs B$ of \emph{equal} regularity.

In fact, there is a thermodynamical approach that specifies the form of the Cauchy stress tensor $\bs T$ (i.e a \emph{tensorial} quantity) from the knowledge of the constitutive equations for two \emph{scalars} $\psi$ and $\xi$  (such as \eqref{eq:helm} and \eqref{eq:diss} above). For more details regarding this process of determining the form of $\bs T$ for the fluids based on positing the mechanisms of energy storage and dissipation (two scalar functions), we refer the reader to \cite{RAJAGOPAL2000}, \cite{RajSri2004}, \cite{MRT2015}, \cite{MalekPrusaChapter} and \cite{MalekPrusaSkrivanSuli18}.

\subsection{Main results and outline}

In \cite{BBJ21}, there was established the long-time existence of weak solutions (for large initial data) to \eqref{eq:divergence}--\eqref{eq:initial} considered on three-dimensional Lipschitz domains for Navier slip boundary conditions.  A major difficulty in using the energy balance \eqref{eq:energybalance} in an approximation scheme is justifying that $\bs B^{-1}$ exists almost everywhere and is in an appropriate test space. The work \cite{BBJ21} introduced a novel two-step approximation scheme for $(\bs v, \bs B)$ with parameters $(k, \eps) \in \bbN \times (0,1)$ involving the minimal eigenvalue $\Lambda(\bs B)$ of $\bs B$ so that:
\begin{itemize}
	\item in step one, a uniform-in-$k$ energy estimate is obtained by pairing a certain Galerkin system only with the approximates $(\bs v^k_{\eps}, \bs B^k_{\eps})$,
	\item in step two, the weak limit $(\bs v_{\eps}, \bs B_{\eps})$ is shown to satisfy $\Lambda(\bs B_{\eps}) \geq \eps$ almost everywhere allowing for an approximate form of \eqref{eq:energybalance} (uniform-in-$\eps$) to be obtained.
\end{itemize}

In this work we push this methodology further and establish the global existence and uniqueness of weak solutions to \eqref{eq:divergence}--\eqref{eq:initial} for large initial data in two dimensions. We work on $\bbT^2$ for simplicity, but it is likely one can incorporate boundaries and different boundary conditions consistent with the second law of thermodynamics. In Section 2 we give the precise definition of weak solutions we study (see Definition \ref{d:weaksoln}) and statements of our main results: global existence and uniqueness of weak solutions (see Theorem \ref{t:main2}) and higher regularity of weak solutions (see Theorem \ref{t:main1}). In Section 3 we prove uniqueness of weak solutions using a combination of H\"older's inequality, Sobolev embedding and interpolation.  An adaptation of the Galerkin approximation scheme introduced in \cite{BBJ21}, where the existence of weak solutions was established for large data, is given in the appendix. Finally, in Section 4 we establish simple propagation of regularity results (see Lemma \ref{l:reg} and Lemma \ref{l:higherregu}) and prove that smooth initial data and forcing launch smooth solutions with $\bs B$ positive definite everywhere, rather than only almost everywhere (see Proposition \ref{t:main1}), proving Theorem~\ref{t:main1}.

We comment that the proof of our main result can be contrasted with the work \cite{ConstKliegl2dOldroyd} on the standard Oldroyd-B model in two dimensions. There, Constantin and Kliegl establish global well-posedness for strong solutions by using the maximum principle to verify $\bs B$ is positive definite everywhere and using \eqref{eq:energybalance} to obtain sufficient apriori bounds. In contrast, for our model we prove that a global weak solution exists, is unique, and a posteriori, we show that more regular data launches a more regular solution with $\bs B$ positive definite everywhere. Finally, we remark that our main results along with \cite{BBJ21} effectively prove that solutions to \eqref{eq:divergence}--\eqref{eq:initial} enjoy at least the same level of regularity as solutions to the standard incompressible Navier--Stokes equations in dimensions two and three.

\section{Weak solutions}

\subsection{Notation}

In what follows we denote scalar, vector and tensor quantities by $a$, $\bs a$ and $\bs A$ respectively. We use the same notation for function spaces of scalar, vector or tensor-valued functions, but the context should be clear. The Lebesgue and $L^2$-based Sobolev spaces on $\bbT^2$ are denoted by $L^p$ and $H^s$, respectively. The $L^2$ pairing and norm are denoted by $(\cdot,\cdot)$ and $\| \cdot \|$ respectively, and the pairing between dual spaces (clear from the context) is denoted by $\ang{\cdot, \cdot}$. We denote
\begin{align*}
L^2_0 = \Bigl \{ \bs v \in L^2 : \int_{\bbT^2} \bs v \, dx = \bs 0 \Bigr \},
\end{align*}
and for $s \geq 0$,
\begin{align*}
H^s_{0,\div} &= \Bigl \{ \bs v \in H^s \cap L^2_{0} : \div \bs v = 0  \Bigr \}, \\
H^{-s}_{0,\div} &= \Bigl (H^s_{0,\div}\Bigr)^*.
\end{align*}
The second condition in the definition of $H^s_{0,\div}$ is interpreted in the sense of distributions. In addition, we denote the set of  $2 \times 2$ real symmetric matrices by $\Sym$ and the set of  $2 \times 2$ real symmetric positive definite matrices by $\Sym_+$. Moreover, we define
\begin{align*}
H^s_{\Sym} &= \left \{ \bs A \in H^s  : \bs A \in \Sym \right\}, \\
L^{p}_{\Sym} &= \left \{ \bs A \in L^p  : \bs A \in \Sym \right\}.
\end{align*}

For $\bs A \in \bbR^{2 \times 2}$, we denote
\begin{align}
	\bs S(\bs A) &= (1-\beta) (\bs A - \bs I) + \beta (\bs A^2 - \bs A), \\
	\bs R(\bs A) &= \delta_1(\bs A - \bs I) + \delta_2(\bs A^2 - \bs A).
\end{align}
We note that the Cauchy stress tensor can then be written as
\begin{align}
	\bs T = -p \bs I + 2 \bs D(\bs v) + 2a \bs S(\bs B).
\end{align}

\subsection{The definition of weak solutions and statement of the main result}

\begin{defn}\label{d:weaksoln}
Let $T > 0$. Let $\beta \in (0,1), \delta_1, \delta_2 \geq 0$, $a \in \bbR$. Let $\bs f \in L^2(0,T; H^{-1}_{0,\div})$.
A pair $(\bs v, \bs B) : [0,T] \rar \bbR^2 \times \mbox{Sym}_+$ is a weak solution to \eqref{eq:divergence}--\eqref{eq:initial} if the following hold:
\begin{enumerate}[(a)]
	\item $\bs v \in L^2(0,T;H^1_{0,\div}) \cap C([0,T];L^2)$ and $\p_t \bs v \in L^2(0,T; H^{-1}_{0,\div}),$
	
	\item $\bs B \in L^2(0,T;H^1_{\Sym}) \cap C([0,T];L^2)$ and \newline
	$\p_t \bs B \in [L^2(0,T; H^1) \cap L^4(0,T;L^4)]^*,$
	
	\item  the following two identities hold:
	\begin{equation}
	\begin{aligned}
		\int_0^T &[\ang{\p_t \bs v, \bs \varphi} + ((\bs v \cdot \nabla) \bs v, \bs \varphi) ] dt \\
		&= -\int_0^T (\nabla \bs v, \nabla \bs \varphi) - 2a (\bs S(\bs B), \grad \bs \varphi) + \ang{\bs f, \bs \varphi} dt, \label{eq:veq}\\
		&\textrm{for all $\bs \varphi \in L^2(0,T; H^1_{0,\div})$},
	\end{aligned}
	\end{equation}
	\begin{equation}
	\begin{aligned}
	\int_0^T &[\ang{\p_t \bs B, \bs A } + ((\bs v \cdot \nabla ) \bs B + 2 \bs B\bs W(\bs v) - 2 a \bs B \bs D(\bs v), \bs A) ] dt \\
	&+ \int_0^T (\bs R (\bs B), \bs A) dt =
	- \int_0^T (\nabla \bs B, \nabla \bs A) dt. \label{eq:Beq}\\
	&\textrm{for all  $\bs A \in L^2(0,T; H^1_{\Sym}) \cap L^4(0,T;L^4)$}
	\end{aligned}
\end{equation}
\end{enumerate}
\end{defn}

A few remarks in order. The pressure $p$ does not appear in Definition \ref{d:weaksoln} since we are essentially considering weak solutions to
\begin{align}
	\p_t \bs v + P((\bs v \cdot \nabla)\bs v) &= \Delta \bs v + 2a P(\div \bs S(\bs B)) + \bs f, \\
	\diam{\bs B} + \bs R(\bs B) &= \Delta \bs B, \label{eq:Leray3}
\end{align}
where $P : L^2_0 \rar L^2_{0,\div}$ is the Leray projector. However, if the external forces~$\bs f$ are slightly more regular, then the pressure can be defined a~posteriori (see Lemma \ref{l:reg} or proof of Theorem~\ref{t:main2}).

We also recall the Ladyzhenskaya inequality valid in two dimensions
\begin{equation}
    \label{Lady}
    \|u\|^2_4 \le C\|u\| \|u\|_{H^1}.
\end{equation}
Then it is also remarkable that  by H\"older's inequality, it follows that $\bs B$ from Definition \ref{d:weaksoln} satisfies
\begin{align*}
	\int_0^T \| \bs B \|^4_{L^4} dt \leq C\| \bs B \|_{L^\infty L^2}^2
	\int_0^T \| \nabla \bs B \|^2+\|\bs B\|^2 dt.
\end{align*}
Thus, $\bs B$ can be paired with $\p_t \bs B$. By an application of the Hahn-Banach theorem, we have
\begin{align*}
[L^2(0,T; H^1) \cap L^4(0,T;L^4)]^* =
L^2(0,T;H^{-1}) + L^{4/3}(0,T;L^{4/3}),
\end{align*}
 and a mollification (in time) argument yields $\bs B \in C([0,T];L^2)$, $t \mapsto \| \bs B(t)\|^2$ is absolutely continuous and
\begin{align*}
	\frac{d}{dt} \| \bs B(t) \|^2 = 2 \ang{\p_t \bs B(t), \bs B(t)}
\end{align*}
a.e. on $[0,T]$; see e.g. \cite{EvansPDE} for the standard argument involving functions in $L^2(0,T;H^1)$ with time derivatives in $L^2(0,T;H^{-1})$.

The main results of this work are the following existence and uniqueness of global weak solutions, and the existence and uniqueness of global smooth solutions for smooth initial data and forcing.

\begin{thm}\label{t:main2}
Let $T > 0$. Let $\beta \in (0,1), \delta_1, \delta_2 \geq 0$, $a \in \bbR$. Let
\begin{align}
	&\bs f \in L^2(0,T; H^{-1}_{0,\div}), \quad \bs v_0 \in L^2_{0,\div}, \\
	&\bs B_0 : \bbT^2 \rar \mbox{Sym}_+, \bs B_0 \in L^2 \textrm{ and } \log \det \bs B_0 \in L^1.
\end{align}
Then there exists a unique weak solution to \eqref{eq:divergence}--\eqref{eq:initial} on $[0,T] \times \bbT^2$ satisfying
$\bs v(0) = \bs v_0$, $\bs B(0) = \bs B_0$.
\end{thm}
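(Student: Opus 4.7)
The theorem has two distinct parts. For existence, I would adapt the two-step Galerkin scheme of \cite{BBJ21} to the two-dimensional periodic setting. Introduce a regularized free energy $\psi_\eps$ for $\eps > 0$ that prevents $\bs B$ from losing positive definiteness, and construct Galerkin approximations $(\bs v^k_\eps, \bs B^k_\eps)$ at level $k$. Uniform-in-$k$ energy estimates come from pairing the Galerkin ODE system with the approximates themselves; passing $k \to \infty$ yields $(\bs v_\eps, \bs B_\eps)$ satisfying an approximate system with $\Lambda(\bs B_\eps) \geq \eps$ a.e. The second step sends $\eps \to 0$ using the approximate energy balance \eqref{eq:energybalance} to extract strong compactness and identify the limits of the Giesekus-type quadratic nonlinearities. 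Two-dimensional Sobolev embedding via \eqref{Lady}, applied to both $\bs v$ and $\bs B$, upgrades them to $L^4_{t,x}$ and makes all convective and quadratic terms well-defined in the weak formulation. Since this is a straightforward 2D adaptation of an existing argument, I would relegate it to the appendix.

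For uniqueness, let $(\bs v_i, \bs B_i)_{i=1,2}$ be two weak solutions with the same initial data. Set $\bs w = \bs v_1 - \bs v_2$, $\bs C = \bs B_1 - \bs B_2$, subtract the two copies of \eqref{eq:veq}--\eqref{eq:Beq}, and test with $\bs w$ and $\bs C$ respectively; this is legitimate because of the time-continuity statement for $\|\bs B(t)\|^2$ recalled after Definition~\ref{d:weaksoln}, together with its standard analogue for $\|\bs v(t)\|^2$. Using the identity $\bs B_1^2 - \bs B_2^2 = \bs B_1 \bs C + \bs C \bs B_2$, every nonlinear difference on the right-hand side factors into a product of one of the fixed quantities $\bs v_2, \nabla\bs v_2, \bs B_1, \bs B_2, \nabla \bs B_2$ with a difference quantity ($\bs w$, $\bs C$, $\nabla\bs w$, or $\nabla\bs C$). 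I would bound each such product by H\"older's inequality in space, interpolate the $L^4$ norms of $\bs w$ and $\bs C$ between $L^2$ and $H^1$ via \eqref{Lady}, and use Young's inequality to absorb the $H^1$ norms into the dissipation. The outcome is
\begin{align}
\tfrac12\tfrac{d}{dt}\bigl(\|\bs w\|^2 + \|\bs C\|^2\bigr) + \tfrac12\bigl(\|\nabla\bs w\|^2 + \|\nabla\bs C\|^2\bigr) \le g(t)\bigl(\|\bs w\|^2 + \|\bs C\|^2\bigr),
\end{align}
where $g(t)$ is a polynomial in $\|\nabla\bs v_2(t)\|$, $\|\nabla\bs B_2(t)\|$, $\|\bs B_1(t)\|_{L^4}^2$, $\|\bs B_2(t)\|_{L^4}^2$ (and similar), and hence $g \in L^1(0,T)$ by the regularity stated in Definition~\ref{d:weaksoln} and the consequence of \eqref{Lady} recalled immediately after it. Gr\"onwall's inequality combined with $\bs w(0) = \bs C(0) = 0$ then forces $\bs w \equiv 0$ and $\bs C \equiv 0$.

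The main obstacle I anticipate is the quadratic Giesekus-type term $\bs B^2 - \bs B$ entering both $\bs S(\bs B)$ and $\bs R(\bs B)$: since $\bs B$ a~priori only lies in $L^\infty_t L^2_x \cap L^2_t H^1_x$, the pairings $(\bs B_i \bs C, \bs D(\bs w))$ and $(\bs B_i \bs C, \bs C)$ are at the critical scaling, and closing them requires the 2D Ladyzhenskaya improvement and coupling the velocity and tensor estimates simultaneously rather than successively. A parallel difficulty arises for the cross terms $(\bs B_2 \bs D(\bs w), \bs C)$ and $(\bs B_2 \bs W(\bs w), \bs C)$ coming from the objective derivative, but they admit the same treatment. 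Finally, the pressure $p$ is recovered a~posteriori from the momentum equation via the Leray projector, as indicated by the remark following Definition~\ref{d:weaksoln}.
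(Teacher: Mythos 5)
Your proposal is correct and follows essentially the same route as the paper: existence via the two-step Galerkin scheme of \cite{BBJ21} (with the caveat that the paper's regularization is a multiplicative cutoff $\rho_\eps(\bs B)$ applied to the nonlinear terms, not a modified free energy, though the step-one/step-two structure you describe is exactly right), and uniqueness by testing the difference equations with $\bs w = \bs v_1 - \bs v_2$ and $\bs C = \bs B_1 - \bs B_2$, using the Ladyzhenskaya inequality \eqref{Lady} to control the critically-scaled quadratic pairings, absorbing the $H^1$ pieces into the dissipation by Young, and closing with Gr\"onwall. The coupling issue you flag (estimating $\bs w$ and $\bs C$ simultaneously rather than successively, since $\|\nabla\bs w\|$ and $\|\nabla\bs C\|$ appear on both sides) is precisely handled in the paper by adding the two energy estimates \eqref{eq:west} and \eqref{eq:Cest} before applying Gr\"onwall.
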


\begin{thm}\label{t:main1}
	Let $\beta \in (0,1), \delta_1, \delta_2 \geq 0$, $a \in \bbR$.
	Suppose that $\bs v_0$ is a smooth vector field on $\bbT^2$ with $\int_{\bbT^2} \bs v_0 \, dx = \bs 0$, $\bs B_0$ is a smooth, symmetric, positive definite tensor field on $\bbT^2$, and $\bs f$ is a smooth vector field on $[0,\infty) \times \bbT^2$ with $\div \bs f = 0$. Then there exist a unique smooth vector field $\bs v$ with $\int_{\bbT^2} \bs v \, dx = \bs 0$, a unique smooth scalar function $p$ with $\int_{\bbT^2} p \, dx = 0$, and a unique symmetric, positive definite tensor field $\bs B$ on $[0,\infty) \times \bbT^2$ solving \eqref{eq:divergence}--\eqref{eq:initial}.
\end{thm}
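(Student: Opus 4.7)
The plan has three stages: invoke Theorem \ref{t:main2} to obtain a unique global weak solution, bootstrap its regularity to $C^\infty$ via Lemma \ref{l:reg} and Lemma \ref{l:higherregu}, and then promote a.e.\ positive definiteness of $\bs B$ to pointwise positive definiteness by a scalar parabolic argument on $\log\det\bs B$.

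Under the hypotheses of Theorem \ref{t:main1}, the data $\bs v_0, \bs B_0, \bs f$ are smooth, $\bs B_0 \in \Sym_+$ pointwise, and in particular $\log\det\bs B_0 \in L^1$, so Theorem \ref{t:main2} applies on every $[0,T]$ and supplies a unique global weak solution $(\bs v, \bs B)$; uniqueness at the smooth level is then automatic because any smooth solution is also a weak one. To upgrade regularity, I would apply Lemma \ref{l:reg} and Lemma \ref{l:higherregu} inductively: at the $s$-th step, differentiate \eqref{eq:momentum} and \eqref{eq:tensb} in space, pair with an appropriate power of $-\Delta$, and close the resulting estimates using the two-dimensional Ladyzhenskaya inequality \eqref{Lady}, Sobolev embedding, and interpolation, exactly as in the classical 2D Navier--Stokes analysis. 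This yields $\bs v, \bs B \in C([0,T];H^s)$ for every $s$. Time smoothness follows from the equations by recursion, and the pressure is recovered a posteriori as the unique zero-mean solution of the Poisson problem $-\Delta p = \div[(\bs v\cdot\nabla)\bs v - 2a\,\div\bs S(\bs B) - \bs f]$, which is smooth because its right-hand side is.

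The main obstacle is the last step: the weak theory yields only $\bs B \in \Sym_+$ almost everywhere, whereas smoothness together with $\bs B_0 \in \Sym_+$ only guarantees pointwise positivity on a small initial time interval, and one must rule out loss of positivity in finite time. Set $q := \log\det\bs B$, well-defined on such an interval. Using $\p_t q = \tr(\bs B^{-1}\p_t\bs B)$, the identity
\begin{align*}
\tr(\bs B^{-1}\Delta\bs B) = \Delta q + |\bs B^{-1/2}\nabla\bs B\,\bs B^{-1/2}|^2,
\end{align*}
the cancellations
\begin{align*}
\tr\bigl(\bs B^{-1}(\bs D(\bs v)\bs B + \bs B\bs D(\bs v))\bigr) = 2\,\div\bs v = 0, \qquad \tr\bigl(\bs B^{-1}(\bs W(\bs v)\bs B - \bs B\bs W(\bs v))\bigr) = 0,
\end{align*}
and the elementary identities $\tr(\bs B^{-1}(\bs B-\bs I)) = 2 - \tr\bs B^{-1}$, $\tr(\bs B^{-1}(\bs B^2-\bs B)) = \tr\bs B - 2$, one computes
\begin{align*}
\p_t q + (\bs v\cdot\nabla)q - \Delta q = |\bs B^{-1/2}\nabla\bs B\,\bs B^{-1/2}|^2 + \delta_1(\tr\bs B^{-1} - 2) + \delta_2(2 - \tr\bs B).
\end{align*}
The first two terms on the right are nonnegative and bounded below by $-2\delta_1$, respectively, and because the previous step has already established $\bs B \in L^\infty([0,T]\times\bbT^2)$, the Giesekus contribution $\delta_2(2-\tr\bs B)$ is likewise bounded below by a constant. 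Altogether $\p_t q + (\bs v\cdot\nabla)q - \Delta q \ge -C_T$, so the scalar parabolic comparison principle yields $q(t,x) \ge \min_{y\in\bbT^2} q(0,y) - C_T\,t$ on the interval of definition. This uniform lower bound prevents collapse of positivity, so the interval of definition of $q$ extends to $[0,T]$; combined with the $L^\infty$ bound on $\tr\bs B$ it forces both eigenvalues of $\bs B$ away from zero, proving pointwise positive definiteness on $[0,T]$ and, since $T$ is arbitrary, globally. The one genuinely delicate point is that the Giesekus-type term contributes with the ``wrong'' sign in the identity for $q$; the payoff from performing the regularity bootstrap \emph{before} the log-determinant analysis is precisely that this contribution is then a priori bounded below.
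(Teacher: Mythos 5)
Your regularity bootstrap (via Theorem~\ref{t:main2}, Lemma~\ref{l:reg}, Lemma~\ref{l:higherregu}), the recovery of the pressure, and uniqueness all coincide with the paper's proof. The genuine departure is in promoting almost-everywhere positive definiteness of $\bs B$ to pointwise positive definiteness. The paper follows Constantin--Kliegl~\cite{ConstKliegl2dOldroyd}: it writes $\bs B$ in scalar components $e,f,g$, identifies the minimum eigenvalue as $\lambda_{\min} = g/2 - (e^2+f^2)^{1/2}$, derives a parabolic differential inequality for $\lambda_{\min}$ using the pointwise estimate $\Delta(e^2+f^2)^{1/2} \ge (e^2+f^2)^{-1/2}(e\Delta e + f\Delta f)$, and applies the maximum principle, the zeroth-order coefficient being bounded by the already-established $L^\infty$ bound on $g$. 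You instead track $q = \log\det\bs B$, exploit the trace cancellations of the $\bs D(\bs v)$ and $\bs W(\bs v)$ terms, and use the identity $\tr(\bs B^{-1}\Delta\bs B) - \Delta q = |\bs B^{-1/2}\nabla\bs B\,\bs B^{-1/2}|^2 \ge 0$ to obtain a parabolic inequality with a bounded lower-order term, then combine the resulting lower bound on $\det\bs B$ with the $L^\infty$ bound on $\tr\bs B$ to control $\lambda_{\min}$. Both routes are correct scalar maximum-principle arguments applied after the regularity bootstrap. Yours has the appealing feature that $\log\det\bs B$ is precisely the quantity appearing in the Helmholtz free energy~\eqref{eq:helm}, and the identity you invoke is the one producing the first dissipation term in~\eqref{eq:diss}, so the argument sits naturally inside the energetics; it is also manifestly dimension-independent, whereas the paper's $(e,f,g)$ decomposition is specific to $2\times 2$ matrices. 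The tradeoff is that you invoke the a priori $L^\infty$ bound on $\bs B$ twice (to bound the Giesekus term $\delta_2(2-\tr\bs B)$ from below, and to pass from a lower bound on $\det\bs B$ to one on $\lambda_{\min}$), whereas the paper controls $\lambda_{\min}$ directly. If you write this up, make the continuation argument at the end fully explicit: show that $\{(t,x):\Lambda(\bs B(t,x))>0\}$ is open, nonempty, and, by your uniform lower bound on $\det\bs B$ together with continuity of the eigenvalues, relatively closed in $[0,T]\times\bbT^2$, hence equal to it.
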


\section{Uniqueness of Weak Solutions}
We first prove the uniqueness of weak solutions as defined in Section 2. The proof of existence largely follows as in the three-dimensional case \cite{BBJ21} and can be found in the appendix of this work.

\subsection{Proof of uniqueness of weak solutions}\label{s:uniqueness}
Let $(\bs v, \bs B)$ and $(\bs u, \bs A)$ be two weak solutions to \eqref{eq:divergence}--\eqref{eq:initial} with initial data $(\bs v_0, \bs B_0)$. Define $\bs w = \bs v - \bs u$  and $\bs C = \bs B - \bs A$. Then $(\bs w, \bs C)$ satisfy, along with the initial condition $\bs w(0) = \bs 0$, $\bs C(0) = \bs 0$ two weak identities:
\begin{equation}
\begin{aligned}
	\int_0^T &[\ang{\p_s \bs w, \bs \varphi} + (\nabla \bs w, \nabla \bs \varphi) ] ds \\
	= &-\int_0^T ((\bs v \cdot \nabla) \bs w + (\bs w \cdot \nabla) \bs u, \bs \varphi) ds \\
	&- 2a \int_0^T (\bs S (\bs B)- \bs S (\bs A), \grad \bs \varphi) ds\\
	\label{eq:weq}
	&\textrm{for all $\bs \varphi \in L^2(0,T; H^1_{0,\div})$}
\end{aligned}
\end{equation}
and
\begin{equation}\label{eq:Ceq}
\begin{aligned}
\int_0^T &[ \ang{\p_s \bs C, \bs E} + (\nabla \bs C, \nabla \bs E) ] ds \\
=&-\int_0^T ((\bs v \cdot \nabla) \bs C + (\bs w \cdot \nabla)\bs A, \bs E) ds  \\
&+ a \int_0^T (\bs D(\bs v) \bs C + \bs C \bs D(\bs v) + \bs D(\bs w) \bs A + \bs A \bs D(\bs w)), \bs E) ds  \\
&+ \int_0^T (\bs W(\bs v) \bs C - \bs C \bs W(\bs v) + \bs W(\bs w) \bs A - \bs A \bs W(\bs w)), \bs E) ds \\
&- \int_0^T  (\bs R(\bs B) - \bs R(\bs A), \bs E) ds\\
&\textrm{for all $\bs E \in L^2(0,T; H^1_{\Sym}) \cap L^4(0,T;L^4)$.}
\end{aligned}
\end{equation}

Let $t \in [0,T]$. We use $\bs \varphi = \chi_{[0,t]} \bs w$ as a test function in \eqref{eq:weq} and $\bs E=\chi_{[0,t]}
\bs C$ as a test function in \eqref{eq:Ceq}  and estimate the individual terms. Since $\div \bs v = 0$, we conclude via integration by parts that
\begin{align*}
	((\bs v \cdot \nabla) \bs w, \bs w) &= \frac{1}{2} \int_{\bbT^2} \bs v \cdot \nabla |\bs w|^2 dx = 0,\\
	((\bs v \cdot \nabla) \bs C, \bs C)&=\frac{1}{2} \int_{\bbT^2} \bs v \cdot \nabla|\bs C|^2 dx = 0.
\end{align*}
Let $\epsilon > 0$, to be specified. We focus on the remaining terms on the right hand side of~\eqref{eq:weq}. Using the point-wise estimate
$$
|\bs S(\bs B) - \bs S(\bs A)| \le C(1+|\bs A|+ |\bs B|)|\bs B- \bs A|,
$$
we conclude with the help of the H\"{o}lder inequality and the Ladyzhenskaya inequality \eqref{Lady} that
\begin{align*}
     &\left|-(\bs w \cdot \nabla) \bs u, \bs w) - 2a (\bs S (\bs B)- \bs S (\bs A), \grad \bs w)\right|\\
     &\le C\int_{\bbT^2}|\bs w|^2|\nabla \bs u| + (1+|\bs A|+ |\bs B|)|\bs C||\nabla \bs w|\\
     &\le C(\|\nabla \bs u\| \|\bs w\|_4^2 +(1+\|\bs A\|_4+ \|\bs B\|_4)\|\bs C\|_4\|\nabla \bs w\| \\
     &\le C(\|\nabla \bs u\| \|\bs w\|\|\nabla \bs w\| +(1+\|\bs A\|_4+ \|\bs B\|_4)\|\bs C\|^{\frac12}(\|\bs C\|^{\frac12}+\|\nabla \bs C\|^{\frac12})\|\nabla \bs w\|\\
     &\le \epsilon (\|\nabla \bs w\|^2 +\|\nabla \bs C\|^2) + C ( \| \bs w \|^2 + \| \bs C \|^2)
	( \| \bs u \|^2 + 1 + \| \bs B \|^4_4 +\| \bs A \|^4_4).
\end{align*}
Thus, as long as $\epsilon$ is chosen sufficiently small, we can apply the Young inequality and deduce from \eqref{eq:weq} that  for all $t \in [0,T]$
\begin{align}
	&\| \bs w(t) \|^2 + \frac{199}{100}  \int_0^t \| \nabla \bs w \|^2 ds \\
	&\leq C \int_0^t ( \| \bs w \|^2 + \| \bs C \|^2)
	( \| \bs u \|^2 + 1 + \| \bs B \|^4_4 +\| \bs A \|^4_4
	) ds \\
	&+ \frac{1}{100} \int_0^t \| \nabla \bs C \|^2 ds. \label{eq:west}
\end{align}
where $C$ is a constant depending on $\beta$ and $a$.

Similarly, as we have
$$
|\bs R(\bs B)-\bs R(\bs A)|\le C(1+|\bs A|+|\bs B|)|\bs B-\bs A|
$$
we can use the H\"{o}lder inequality and the Ladyzhenskaya inequality to get for the terms on the right hand side of \eqref{eq:Ceq}
\begin{align*}
-& (\bs w \cdot \nabla)\bs A, \bs C) + a (\bs D(\bs v) \bs C + \bs C \bs D(\bs v) + \bs D(\bs w) \bs A + \bs A \bs D(\bs w)), \bs C) \\
&+ (\bs W(\bs v) \bs C - \bs C \bs W(\bs v) + \bs W(\bs w) \bs A - \bs A \bs W(\bs w)), \bs C)  -  (\bs R(\bs B) - \bs R(\bs A), \bs C) \\
&\le C\int_{\bbT^2} |\bs w||\nabla \bs A||\bs C| +(1+|\bs A|+|\bs B|)|\nabla \bs v||\bs C|^2 +|\nabla \bs w||\bs C|(1+|\bs A|+|\bs B|)\\
&\le C \|\bs w\|_4 \|\nabla \bs A\|\|\bs C\|_4 +C(1+\|\bs A\|_4+\|\bs B\|_4)\|\nabla \bs v\|\|\bs C\|^2_4 +\\
&\quad +C\|\nabla \bs w\|\|\bs C\|_4(1+\|\bs A\|_4+\|\bs B\|_4)\\
&\le C \|\bs w\|^{\frac12}\|\nabla \bs w\|^{\frac12} \|\nabla \bs A\|\|\bs C\|^{\frac12} (\|\bs C\|^{\frac12}+\|\nabla \bs C\|^{\frac12})\\
&\quad+C(1+\|\bs A\|_4+\|\bs B\|_4)\|\nabla \bs v\|\|\bs C\|(\|\bs C\|+\|\nabla\bs C\|) +\\
&\quad +C\|\nabla \bs w\|\|\bs C\|^{\frac12} (\|\bs C\|^{\frac12}+\|\nabla \bs C\|^{\frac12})(1+\|\bs A\|_4+\|\bs B\|_4)\\
&\le \epsilon (\|\nabla \bs w\|^2 +\|\nabla \bs C\|^2)\\
&\quad + C(\|\bs w\|^2 + \|\bs C\|^2)((1+\|\bs A\|^4_4+\|\bs B\|^4_4+\|\nabla \bs v\|^2 + \|\nabla \bs A\|^2)
\end{align*}
%
Let $\epsilon > 0$, again to be specified.
Choosing $\eps$ sufficiently small and combining the previous estimates we conclude that for all $t \in [0,T]$,
\begin{align}
	\|\bs C(t) \|^2& + \frac{199}{100}  \int_0^t \| \nabla \bs C \|^2 ds \\
	&\leq C \int_0^t (\|\bs w\|^2 + \|\bs C\|^2)(1+\|\bs A\|^4_4+\|\bs B\|^4_4+\|\nabla \bs v\|^2 \\
	&\qquad + \|\nabla \bs A\|^2) ds \\
	&+ \frac{1}{100} \int_0^t \| \nabla \bs w \|^2 ds. \label{eq:Cest}
\end{align}
Adding \eqref{eq:west} and \eqref{eq:Cest} yields, for all $t \in [0,T]$,
\begin{align}
	\| \bs w(t) \|^2 &+ \| \bs C(t) \|^2 + \int_0^t ( \| \nabla \bs w \|^2 + \| \nabla \bs C \|^2 ) ds \\
	&\leq C \int_0^t ( \| \bs w(s) \|^2 + \| \bs C(s) \|^2) g(s) ds, \label{eq:gronwall}
\end{align}
where
\begin{align}
	g&=1+\|\bs u\|^2 + \|\bs v\|^2 + \| \nabla \bs u \|^2 +\|\nabla \bs v\|^2 + \| \bs B \|^4_4 \\&\quad +\|\bs A \|_4^4 + \|\nabla \bs B \|^2 +
	\| \nabla \bs A \|^2.
\end{align}
Now, $g \in L^1([0,T])$ by our assumptions for $(\bs v, \bs B)$ and $(\bs u, \bs A)$. By Gronwall's inequality we conclude that $(\bs w, \bs C) = (\bs 0, \bs 0)$ as desired.

\qed

\section{Higher Regularity}

In this section we prove simple propagation of regularity results for weak solutions and conclude that smooth initial data and forcing launch smooth globally defined solutions.

\subsection{Propagation of regularity}

\begin{lem}\label{l:reg}
	Let $T > 0$. Let $\beta \in (0,1), \delta_1, \delta_2 \geq 0$, $a \in \bbR$,
	\begin{align}
		\bs f \in L^2(0,T; L^2_{0,\div}), \quad \bs v_0 \in H^1_{0,\div},
	\end{align}
	and $\bs B_0 : \bbT^2 \rar \mbox{Sym}_+$ such that $\log \det \bs B_0 \in L^1$,  $\bs B_0 \in H^1$. Let $(\bs v, \bs B)$ be the unique weak solution to \eqref{eq:divergence}--\eqref{eq:initial} on $[0,T] \times \bbT^2$ such that
	\begin{align}
		\bs v(0) = \bs v_0, \quad \bs B(0) = \bs B_0.
	\end{align}
	Then
	\begin{align}
		\begin{split}
		\bs v &\in L^\infty(0,T; H^1) \cap L^2(0,T; H^2) \\
		\p_t \bs v &\in L^2(0,T; L^2_{0,\div}), \\
		\bs B &\in L^\infty(0,T; H^1) \cap L^2(0,T; H^2) \\
		\p_t \bs B &\in L^2(0,T; L^2),
		\end{split}\label{eq:dxregularity},
	\end{align}
there exists a unique scalar function $p \in L^2(0,T; H^1)$ with $\int_{\bbT^2} p \, dx = 0$ such that
\begin{align}
	\p_t \bs v + (\bs v \cdot \nabla)\bs v &= - \grad p + \Delta \bs v + 2a \div \bs S(\bs B) + \bs f, \\
	\diam{\bs B} + \bs R(\bs B) &= \Delta \bs B, \label{eq:pressure}
\end{align}
a.e. on $[0,T] \times \bbT^2$, and
	\begin{align}
		\| \bs v& \|_{L^\infty H^1} + \| \bs B \|_{L^\infty L^2} +
		\| \bs v \|_{L^2 H^2} + \| \bs B \|_{L^2 H^2}
		+ \| \p_t \bs v \|_{L^2 L^2} + \| \p_t \bs B \|_{L^2 L^2} \\
		&\leq C_1 \bigl (\| \bs v_0 \|_{H^1}, \| \psi(\bs B_0) \|_{L^1},
		\| \bs B_0 \|_{H^1}, \| \bs f \|_{L^2 L^2} \bigr ) \label{eq:dxvBenergyest}.
	\end{align}

If, in addition,
\begin{align}
 \bs v_0 \in H^2_{0,\div}, \quad \bs B \in H^2, \quad \p_t \bs f \in L^2(0,T; L^2_{0,\div}), \label{eq:H2regularity}
\end{align}
then
\begin{align}
	\begin{split}
		\bs v &\in L^\infty(0,T; H^2) \\
		\p_t \bs v &\in L^\infty(0,T; L^2_{0,\div}) \cap L^2(0,T;H^1), \\
		\p_t^2 \bs v &\in L^2(0,T; H^{-1}_{0,\div}), \\
		\bs B &\in L^\infty(0,T; H^2),\\
		\p_t \bs B &\in L^\infty(0,T; L^2) \cap L^2(0,T;H^1), \\
				\p_t^2 \bs B &\in L^2(0,T; H^{-1}),
	\end{split}\label{eq:dtregularity},
\end{align}
and
\begin{align}
	\| \bs v &\|_{L^\infty H^2} + \| \bs B \|_{L^\infty H^2} +
	\| \p_t \bs v \|_{L^\infty L^2} + \| \p_t \bs B \|_{L^\infty L^2}
	\\ &+ \| \p_t \bs v \|_{L^2 H^1} + \| \p_t \bs B \|_{L^2 H^1} +
	\| \p_t^2 \bs v \|_{L^2 H^{-1}} + \| \p_t^2 \bs B \|_{L^2 H^{-1}} \\
	&\leq C_1' \bigl (\| \bs v_0 \|_{H^2}, \| \psi(\bs B_0) \|_{L^1},
	\| \bs B_0 \|_{H^2}, \| \bs f \|_{H^1 L^2} \bigr ) \label{eq:dtvBenergyest}.
\end{align}
\end{lem}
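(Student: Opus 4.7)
The plan is to derive the claimed regularity by formal energy estimates—testing \eqref{eq:momentum} against $-\Delta\bs v$ and $\p_t\bs v$, and \eqref{eq:tensb} against $-\Delta\bs B$ and $\p_t\bs B$—and to justify them at the level of the Galerkin scheme used in the existence proof in the appendix, passing to the limit and identifying the limit with the unique weak solution of Theorem~\ref{t:main2}. Below I describe only the a~priori estimates; the main obstacle lies in Step~1, where the cubic nonlinearities arising from $\bs B^2$ in $\bs S(\bs B)$ and $\bs R(\bs B)$, together with the bilinear velocity--$\bs B$ couplings in the objective derivative \eqref{eq:obj_derivative}, must be simultaneously dominated by $\|\Delta\bs v\|^2+\|\Delta\bs B\|^2$ with integrable Gronwall coefficients. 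This is precisely where the two-dimensional structure enters, through the Ladyzhenskaya inequality \eqref{Lady}; once this coupling is closed, the remaining bootstraps are standard parabolic/Stokes theory.

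\emph{Step~1 ($H^1$ propagation).} For \eqref{eq:dxregularity}--\eqref{eq:dxvBenergyest}, test \eqref{eq:veq} with $-\Delta\bs v$ (the pressure disappears by $\div\bs v=0$) and \eqref{eq:Beq} with $-\Delta\bs B$. The transport terms $((\bs v\cdot\nabla)\bs v,\Delta\bs v)$, $((\bs v\cdot\nabla)\bs B,\Delta\bs B)$, the $\bs S(\bs B)$ and $\bs R(\bs B)$ terms, and the couplings $(\bs B\bs D(\bs v),\Delta\bs B)$, $(\bs B\bs W(\bs v),\Delta\bs B)$ are all controlled by H\"older combined with \eqref{Lady}, producing a small constant on $\|\Delta\bs v\|^2+\|\Delta\bs B\|^2$ (absorbable by Young's inequality) and an integrable Gronwall coefficient on the right. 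The outcome is
$$
\tfrac{d}{dt}\bigl(\|\nabla\bs v\|^2+\|\nabla\bs B\|^2\bigr) + \tfrac12\bigl(\|\Delta\bs v\|^2+\|\Delta\bs B\|^2\bigr) \le K(t)\bigl(1+\|\nabla\bs v\|^2+\|\nabla\bs B\|^2\bigr),
$$
with $K\in L^1([0,T])$ guaranteed by the basic energy identity \eqref{eq:energybalance} combined with \eqref{Lady} (which controls $\bs v, \bs B$ in $L^4(0,T;L^4)$). Gronwall then yields the $L^\infty H^1\cap L^2 H^2$ bound. Inserting this back into \eqref{eq:momentum} and \eqref{eq:tensb} gives $\p_t\bs v,\p_t\bs B\in L^2 L^2$, and taking the divergence of \eqref{eq:momentum} produces a Poisson equation for $p$ whose right-hand side is in $L^2 H^{-1}$, yielding the unique $p\in L^2 H^1$ of zero mean.

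\emph{Step~2 ($H^2$ propagation).} For \eqref{eq:dtregularity}--\eqref{eq:dtvBenergyest}, formally differentiate \eqref{eq:momentum} and \eqref{eq:tensb} in time and test with $\p_t\bs v$ and $\p_t\bs B$ respectively. The initial values $\p_t\bs v(0),\p_t\bs B(0)\in L^2$ are read off from the original equations at $t=0$ using $\bs v_0,\bs B_0\in H^2$ and $\bs f(0)\in L^2$ (inferred from $\bs f,\p_t\bs f\in L^2 L^2$ via $H^1(0,T;L^2)\hookrightarrow C([0,T];L^2)$). All nonlinear couplings in the time-differentiated system have coefficients controlled by the Step~1 regularity through Sobolev embedding ($H^1\hookrightarrow L^q$, $q<\infty$, in two dimensions) and interpolation, so Gronwall yields $\p_t\bs v,\p_t\bs B\in L^\infty L^2\cap L^2 H^1$. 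Viewing \eqref{eq:momentum} at each $t$ as a stationary Stokes problem
$$
-\Delta\bs v+\nabla p=-\p_t\bs v-(\bs v\cdot\nabla)\bs v+2a\,\div\bs S(\bs B)+\bs f,
$$
whose right-hand side is now in $L^\infty L^2$, elliptic Stokes regularity gives $\bs v\in L^\infty H^2$ and $p\in L^\infty H^1$; treating \eqref{eq:tensb} analogously as an inhomogeneous heat equation yields $\bs B\in L^\infty H^2$. The second-order time-derivative bounds $\p_t^2\bs v,\p_t^2\bs B\in L^2 H^{-1}$ then follow by reading the time-differentiated equations once the above regularity is known, completing \eqref{eq:dtvBenergyest}.
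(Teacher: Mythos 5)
Your proposal follows essentially the same route as the paper. The paper obtains the $H^1$ propagation by taking spatial derivatives $\bs w=\p_{x^i}\bs v$, $\bs C=\p_{x^i}\bs B$ of the weak formulation and testing against $\bs w$, $\bs C$ — which after integration by parts on $\bbT^2$ is exactly your test against $-\Delta\bs v$, $-\Delta\bs B$ — and then observes that the resulting estimates are structurally identical to the ones already carried out in the uniqueness proof (H\"older, Ladyzhenskaya, Young, Gronwall). Your Step 2 (differentiate in time, test with $\p_t\bs v,\p_t\bs B$, read off $\p_t\bs v(0),\p_t\bs B(0)$ from the equation at $t=0$, then recover $L^\infty H^2$ via the elliptic/Stokes structure) is precisely what the paper does as well, including the interpolation-and-absorption step giving $\|\Delta\bs v\|+\|\Delta\bs B\|\lesssim 1$ pointwise in time.

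The one place your plan diverges from the paper, and where it would be noticeably messier, is the rigorization step. You propose to justify the formal estimates by returning to the Galerkin scheme of the appendix. But that scheme carries the truncation factor $\rho_\eps(\bs B^k_\eps)$ in every nonlinear term of the $\bs B$-equation, and higher-order energy estimates at the Galerkin level would require differentiating $\rho_\eps$ and controlling $\nabla\rho_\eps(\bs B^k_\eps)$, which is not clearly bounded uniformly in $\eps$; the estimates in Lemma~\ref{l:Genergy} are $\eps$-dependent for exactly this reason. The paper sidesteps this entirely: with existence and uniqueness of the weak solution already established (Theorem~\ref{t:main2}), the higher-order estimates are performed directly on the weak solution and made rigorous by replacing partial derivatives with difference quotients $\tau_h^i$, which are admissible test operations within the function spaces of Definition~\ref{d:weaksoln} and commute with the weak formulation on $\bbT^2$. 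That route avoids the $\rho_\eps$-issue and is the simpler way to close Step~1. Otherwise your estimates and bookkeeping match the paper's argument.
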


\begin{proof}
We will give the arguments for \eqref{eq:dxvBenergyest} and \eqref{eq:dtvBenergyest} assuming the regularity \eqref{eq:dxregularity} and \eqref{eq:dtregularity} (respectively) by taking partial derivatives of the equations of motion. The proof assuming only the stated regularity of the initial data then follows from these arguments by using suitable difference quotients rather than partial derivatives.

Restating \eqref{eq:vBenergyest} we have
\begin{align}
	\| \bs v \|_{L^\infty L^2} &+ \| \bs B \|_{L^\infty L^2} +
	\| \nabla \bs v \|_{L^2 L^2} + \| \nabla \bs B \|_{L^2 L^2} \\
	&\leq C_0 \bigl (\| \bs v_0 \|, \| \psi(\bs B_0) \|_{L^1}, \| \bs f \|_{L^2 H^{-1}_{0,\div}} \bigr ) \label{eq:0energyest}.
\end{align}
Let $i \in \{1,2\}$, $\bs w = \p_{x^i} \bs v$ and $\bs C = \p_{x^i} \bs B$. Then
for all $\bs \varphi \in L^2(0,T; H^1_{0,\div})$
\begin{align}
	\int_0^T &[\ang{\p_s \bs w, \bs \varphi} + (\nabla \bs w, \nabla \bs \varphi) ] ds \\
	= &-\int_0^T ((\bs v \cdot \nabla) \bs w + (\bs w \cdot \nabla) \bs v, \bs \varphi) ds - \int_0^T (\bs f, \p_{x^i} \bs \varphi) ds\\
	&- 2a \int_0^T (
	(1 - \beta) \bs C
	+ \beta (\bs C \bs B + \bs B \bs C - \bs C, \grad \bs \varphi) ds,
	\label{eq:dxveq}
\end{align}
and for all $\bs E \in [L^2(0,T;H^1) \cap L^4(0,T;L^4)]^*$, $\bs E^T = \bs E$, we have
\begin{align}
	\int_0^T &[ \ang{\p_s \bs C, \bs E} + (\nabla \bs C, \nabla \bs E) ] ds \\
	=&-\int_0^T ((\bs v \cdot \nabla) \bs C + (\bs w \cdot \nabla)\bs B, \bs E) ds  \\
	&+ a \int_0^T (\bs D(\bs w) \bs B + \bs B \bs D(\bs w) + \bs D(\bs v) \bs C + \bs C \bs D(\bs v)), \bs E) ds \\
	&+ \int_0^T (\bs W(\bs v) \bs C - \bs C \bs W(\bs v) + \bs W(\bs w) \bs B - \bs B \bs W(\bs w)), \bs E) ds  \\
	&- \int_0^T  (\delta_1 \bs C + \delta_2 (\bs C \bs B + \bs B \bs C - \bs C), \bs E) ds. \label{eq:dxBeq}
\end{align}
We now argue identically as in Section \ref{s:uniqueness} and obtain the estimate, for all $t \in [0,T]$,
\begin{align}
\| \bs w(t) \|^2 &+ \| \bs C(t) \|^2 + \int_0^t (\| \nabla \bs w \|^2 + \| \nabla \bs C \|^2 ) ds \\
&\leq \| \bs w(0) \|^2 + \| \bs C(0) \|^2 + \int_0^t \| \bs f \| \| \bs w \| ds \\
&+ C \int_0^t (\| \bs w(s) \|^2 + \| \bs C(s) \|^2 ) g(s) ds \\
&\leq \| \bs v_0 \|_{H^1}^2 + \| \bs B_0 \|^2_{H^1} + \int_0^t \| \bs f \| \| \bs w \| ds \\
&+ C \int_0^t (\| \bs w(s) \|^2 + \| \bs C(s) \|^2 ) g(s) ds,
\end{align}
where
\begin{align}
	g = 1 + \| \nabla \bs v \|^2 + \| \nabla \bs B \|^2 + \| \bs v  \|^2 \| \nabla \bs v \|^2 + \| \bs B \|^2 \| \nabla \bs B \|^2.
\end{align}
By Gronwall's inequality and \eqref{eq:0energyest} we conclude that
\begin{align}
	\| \bs w \|_{L^\infty L^2} &+ \| \bs C \|_{L^\infty L^2} +
\| \nabla \bs w \|_{L^2 L^2} + \| \nabla \bs C \|_{L^2 L^2} \\
&\leq C_{1/3} \bigl (\| \bs v_0 \|_{H^1}, \| \psi(\bs B_0) \|_{L^1}, \| \bs B_0 \|_{H^1}, \| \bs f \|_{L^2 L^2} \bigr ), \label{eq:wCest}
\end{align}
and thus,
\begin{align}
	\| \bs v \|_{L^\infty H^1} &+ \| \bs B \|_{L^\infty H^1} +
\| \bs v \|_{L^2 H^2} + \| \bs B \|_{L^2 H^2} \\
&\leq C_{2/3} \bigl (\| \bs v_0 \|, \| \psi(\bs B_0) \|_{L^1}, \| \bs B_0 \|_{H^1}, \| \bs f \|_{L^2 L^2} \bigr ). \label{eq:dxvBest}
\end{align}
The estimate for $\p_t \bs v$ and $\p_t \bs B$ then follows from \eqref{eq:veq}, \eqref{eq:Beq}, \eqref{eq:dxvBest} and repeated use of H\"older's inequality and the Sobolev embedding $H^2 \hookrightarrow L^\infty$: for example, we estimate the pairing
\begin{align}
	\int_0^T  (\bs B \bs D(\bs v), \bs A)  ds
	&\leq \int_0^T \| \bs B \|_{L^\infty} \| \bs v \|_{H^1} \| A \| ds \\
	&\leq \| \bs B \|_{L^2 H^2} \| \bs v \|_{L^\infty H^1} \| A \|_{L^2 L^2}.
\end{align}
Moreover, we can then conclude that
\begin{align}
\p_t \bs v + P((\bs v \cdot \nabla)\bs v) &= \Delta \bs v + 2a P(\div \bs S(\bs B)) + \bs f, \\
\diam{\bs B} + \bs R(\bs B) &= \Delta \bs B, \label{eq:Leray}
\end{align}
a.e. on $[0,T] \times \bbT^2$ where
$P : L^2_{0} \rar L^2_{0,\div}$
is the Leray projector. The existence and uniqueness of the pressure $p$ then follows immediately from the Helmholtz decomposition of $L_0^2$ vector fields on $\bbT^2$.

We now assume the additional regularity \eqref{eq:H2regularity}. The argument leading to \eqref{eq:wCest} also applies to $\bs w = \p_t \bs v$ and $\bs C = \p_t \bs B$ resulting in the additional estimate
\begin{align}
	\| \p_t \bs v \|_{L^\infty L^2} &+ \| \p_t \bs B \|_{L^\infty L^2} +
\| \p_t \bs v \|_{L^2 H^1} + \| \p_t \bs B \|_{L^2 H^1} \\
&\leq C_{1/3}' \bigl (\| \bs v_0 \|_{H^2}, \| \psi(\bs B_0) \|_{L^1}, \| \bs B_0 \|_{H^2}, \| \bs f \|_{H^1 L^2} \bigr ), \label{eq:dtest}
\end{align}
where we used \eqref{eq:Leray}, \eqref{eq:H2regularity} and Sobolev embedding to bound $\| \p_t \bs v(0) \|$ and $\| \p_t \bs B(0)\|$. By \eqref{eq:Leray}, \eqref{eq:dtest}, Sobolev embedding and interpolation we conclude, for a.e. $t \in [0,T]$,
\begin{align}
\| \Delta \bs v \| &\leq C ( \| \p_t \bs v \| + \| \bs v \|_{L^4} \| \nabla \bs v \|_{L^4}
+ \| \nabla \bs B \| + \| \bs B \|_{L^4} \| \nabla \bs B \|_{L^4} ) \\
&\leq C ( \| \p_t \bs v \| + \| \bs v \|_{H^1}^{3/2} \| \Delta \bs v \|^{1/2}
+ \| \nabla \bs B \| + \| \bs B \|_{H^1}^{3/2} \| \Delta \bs B \|^{1/2}) \\
&\leq C_{2/3}' + \frac{1}{4} \| \Delta \bs v \| + \frac{1}{4} \| \Delta \bs B \|, \label{eq:deltav}
\end{align}
and similarly
\begin{align}
\|  \Delta \bs B \| \leq  C_{2/3}' + \frac{1}{4} \| \Delta \bs v \| + \frac{1}{4} \| \Delta \bs B \|. \label{eq:deltaB}
\end{align}
Summing \eqref{eq:deltav} and \eqref{eq:deltaB} yields, for a.e. $t \in [0,T]$,
\begin{align}
	\| \Delta \bs v \| + \| \Delta \bs B \| \leq 2 C'_{2/3}, \label{eq:H2est}
\end{align}
proving the $L^\infty H^2$ bound. The bound for $\p_t^2 \bs v$ and $\p_t^2 \bs B$ then follows from \eqref{eq:dxveq} with $\bs w = \p_t \bs v$, \eqref{eq:dxBeq} with $\bs C = \p_t \bs B$, \eqref{eq:dtest}, \eqref{eq:H2est} and repeated applications of H\"older's inequality and Sobolev embedding: for example we estimate the pairing
\begin{align}
	\int_0^T (\bs B \bs D(\p_s \bs v), \bs A) ds
	&\leq \| \bs B \|_{L^\infty L^\infty} \| \p_t \bs v \|_{L^2 H^1} \| \bs A \|_{L^2 L^2} \\
	&\leq \| \bs B \|_{L^\infty H^2} \| \p_t \bs v \|_{L^2 H^1} \| \bs A \|_{L^2 H^1}.
\end{align}
This concludes the proof.
\end{proof}

Via an induction argument, the Leibniz formula and repeated use of H\"older's inequality, Sobolev embedding and interpolation (as in the proof of Lemma \ref{l:reg}), we have the follow generalization of Lemma \ref{l:reg}.

\begin{lem}\label{l:higherregu}
	Let $T > 0$, and let $m \in \bbN \cup \{0\}$. Let $\beta \in (0,1), \delta_1, \delta_2 \geq 0$, $a \in \bbR$,
	\begin{align}
		\bs v_0 &\in H^{2m+1}_{0,\div}, \\
		\p_t^j \bs f &\in L^2(0,T; H^{2m-2j}_{0,\div}), \quad j = 0,\ldots,m,
	\end{align}
	and $\bs B_0 : \bbT^2 \rar \mbox{Sym}_+$ such that $\log \det \bs B_0 \in L^1$,  $\bs B_0 \in H^{2m+1}$. Let $(\bs v, \bs B)$ be the unique weak solution to \eqref{eq:divergence}--\eqref{eq:initial} on $[0,T] \times \bbT^2$ such that
	\begin{align}
		\bs v(0) = \bs v_0, \quad \bs B(0) = \bs B_0.
	\end{align}
	Then for $k = 0,\ldots,m+1$,
	\begin{align}
		\begin{split}
			\p_t^k \bs v &\in L^2(0,T; H^{2m+2-2k}_{0,\div}), \\
			\p_t^k \bs B &\in L^2(0,T; H^{2m+2-2k}).
		\end{split}.
	\end{align}
\end{lem}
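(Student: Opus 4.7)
The proof is by induction on $m$, with base case $m=0$ being Lemma~\ref{l:reg}. For the inductive step, assume the conclusion at level $m$ and promote the data hypotheses to level $m+1$: $\bs v_0, \bs B_0 \in H^{2m+3}$ (with $\bs B_0 \in \Sym_+$ and $\log \det \bs B_0 \in L^1$), and $\p_t^j \bs f \in L^2(0,T; H^{2(m+1)-2j}_{0,\div})$ for $j=0,\ldots,m+1$. The plan is to view each evolution equation as a linear heat equation with nonlinear forcing and invoke the classical parabolic regularity theorem on $\bbT^2$.

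Recast the system as
\begin{align}
    \p_t \bs v - \Delta \bs v &= -P((\bs v\cdot\nabla)\bs v) + 2aP(\div \bs S(\bs B)) + \bs f =: \bs g_{\bs v}, \\
    \p_t \bs B - \Delta \bs B &= -(\bs v\cdot\nabla)\bs B + a(\bs D(\bs v)\bs B + \bs B\bs D(\bs v)) + (\bs W(\bs v)\bs B - \bs B \bs W(\bs v)) - \bs R(\bs B) =: \bs g_{\bs B}.
\end{align}
The standard parabolic regularity theorem states: if $u_0 \in H^{2N+1}$, $\p_t^j g \in L^2(0,T; H^{2N-2j})$ for $j = 0, \ldots, N$, and the recursively-defined initial time-derivatives $\p_t^k u(0)$ are compatible and lie in $H^{2N+1-2k}$, then the solution of $\p_t u - \Delta u = g$ satisfies $\p_t^k u \in L^2(0,T; H^{2N+2-2k})$ for $k=0,\ldots,N+1$. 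Applying this with $N=m+1$ reduces the level-$(m+1)$ conclusion to two sub-tasks: (i) verify that $\p_t^j \bs g_{\bs v}, \p_t^j \bs g_{\bs B} \in L^2(0,T; H^{2(m+1)-2j})$ for $j = 0, \ldots, m+1$, and (ii) verify that $\p_t^k \bs v(0), \p_t^k \bs B(0) \in H^{2m+3-2k}$ for $k = 0, \ldots, m+1$. Sub-task (ii) follows from the enhanced smoothness of the data by recursively solving \eqref{eq:momentum}--\eqref{eq:tensb} at $t=0$, exactly as in the $H^2$-half of the proof of Lemma~\ref{l:reg}.

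Sub-task (i) constitutes the main obstacle. Expanding $\p_t^j \bs g$ by the Leibniz rule produces a sum of products, each factor being $\p_t^{k_i}\bs v$ or $\p_t^{k_i}\bs B$ possibly with one spatial derivative. The inductive hypothesis at level $m$ furnishes $\p_t^k\bs v, \p_t^k\bs B \in L^2(0,T; H^{2m+2-2k}) \cap C([0,T]; H^{2m+1-2k})$ (the latter via Aubin--Lions embedding). Using the tame product estimate $\|uv\|_{H^s} \lec \|u\|_{L^\infty}\|v\|_{H^s} + \|u\|_{H^s}\|v\|_{L^\infty}$, the algebra property of $H^s(\bbT^2)$ for $s > 1$, the Sobolev embeddings $H^2(\bbT^2) \hookrightarrow L^\infty$ and $H^1(\bbT^2) \hookrightarrow L^p$ for every finite $p$, and the Ladyzhenskaya-type interpolation \eqref{Lady}, one checks that every Leibniz summand can be allocated across $L^\infty, L^4, L^2$ norms in space and across $L^\infty, L^2$ in time so that the overall product sits in $L^2(0,T; H^{2(m+1)-2j})$. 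This is the very same mechanism that closes the uniqueness estimate of Section~\ref{s:uniqueness} and the $H^2$-bootstrap within the proof of Lemma~\ref{l:reg}, and it propagates uniformly through every level of the induction; the pressure is finally recovered a posteriori via the Helmholtz decomposition.
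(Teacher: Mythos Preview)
Your overall scheme---induction on $m$ with Lemma~\ref{l:reg} as the base case, Leibniz expansions, and product estimates---matches the paper's one-line justification. However, sub-task~(i) as you describe it does not close. To place $\bs g_{\bs v}$ in $L^2(0,T;H^{2m+2})$ you must control, after $2m+2$ spatial derivatives, the extreme Leibniz term $\bs v\cdot\nabla^{2m+3}\bs v$ in $L^2_tL^2_x$, which requires $\bs v\in L^2(0,T;H^{2m+3})$. The level-$m$ hypothesis gives only $\bs v\in L^2(0,T;H^{2m+2})\cap C([0,T];H^{2m+1})$, so the bookkeeping is one derivative short; the same deficit appears in $\div\bs S(\bs B)$ and throughout $\bs g_{\bs B}$. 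Treating the nonlinearity as external forcing and invoking black-box parabolic regularity at level $N=m+1$ is therefore circular: the regularity you need for the forcing is essentially what you are trying to prove.

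The paper's route avoids this by \emph{not} using the heat semigroup as a black box. As in the proof of Lemma~\ref{l:reg}, one differentiates the coupled system (in $x$ or $t$), pairs the differentiated equations with the derivatives themselves, and closes via Gronwall: the nonlinear terms enter multiplicatively through the integrable factor $g(s)$ and are absorbed by the exponential, rather than having to sit in a fixed $L^2H^s$ space a priori. If you prefer the semigroup framing, you can repair your argument by iterating parabolic regularity twice within each induction step (first gain one spatial derivative, recompute $\bs g_{\bs v},\bs g_{\bs B}$ with the improved bounds, then gain the second), but this intermediate bootstrap must be stated explicitly.
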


\subsection{Proof of Theorem \ref{t:main1}}

Let $T > 0$. By \eqref{eq:pressure} and Lemma \ref{l:higherregu} with $m = 0, 1, \ldots,$ we conclude there exist unique smooth $\bs v$, $\bs B$ and $p$ satisfying \eqref{eq:divergence}--\eqref{eq:initial} on $[0,T]$ (the regularity of $p$ following from that of $\bs v$ and $\bs B$ and \eqref{eq:momentum}). Since $T > 0$ was arbitrary we conclude that there exist unique smooth $\bs v$, $\bs B$ and $p$ satisfying  \eqref{eq:divergence}--\eqref{eq:initial} on $[0,\infty)$. Moreover, $\bs B$ is positive definite almost everywhere on $[0,\infty) \times \bbT^2$.

We now prove that $\bs B$ is positive definite everywhere by performing a calculation similar to that done for the standard Oldroyd-B model in \cite{ConstKliegl2dOldroyd}. We write
\begin{align}
	\bs B = \begin{pmatrix}
		\frac{g}{2} + e & f \\
		f & \frac{g}{2} - e
	\end{pmatrix}, \quad
a\bs D(\bs v) + \bs W(\bs v) =
\begin{pmatrix}
	\alpha & \beta \\
	\gamma &-\alpha
\end{pmatrix}.
\end{align}
From \eqref{eq:tensb} we conclude, with $D_t = \p_t + \bs v \cdot \nabla$, that
\begin{align}
	\begin{split}
D_t e &= \Delta e - (\delta_1 + (g - 1)\delta_2) e + (\beta - \gamma)f + \alpha g, \\
D_t f &= \Delta f - (\delta_1 + (g - 1)\delta_2) f - (\beta - \gamma)e + \frac{1}{2}(\beta + \gamma) g, \\
D_t g &= \Delta g - (\delta_1 + (g - 1)\delta_2) g + 4\alpha e + 2(\beta + \gamma) f \\
&\quad + 2\delta_1 + 2\delta_2\Bigl (\frac{g^2}{4} - (e^2 + f^2) \Bigr ).
	\end{split} \label{eq:efgeq}
\end{align}
The maximum and minimum eigenvalues of $\bs B$ are given by
\begin{align}
\lambda_{max} = \frac{g}{2} + (e^2 + f^2)^{1/2}, \quad \lambda_{min} =
\frac{g}{2} - (e^2 + f^2)^{1/2}.
\end{align}
We note that
\begin{align}
\Delta(e^2 + f^2)^{1/2} &- (e^2+f^2)^{-1/2}e\Delta e - (e^2 + f^2)^{-1/2} f \Delta f \\
&= (e^2 + f^2)^{-1/2} \sum_{j = 1}^2 (e \p_j f - f \p_j e)^2 \geq 0. \label{eq:deltaineq}
\end{align}
Using \eqref{eq:efgeq} and \eqref{eq:deltaineq} we conclude
\begin{align}
D_t &\lambda_{min} + \Delta \lambda_{min} \\
&= -[ \delta_1 + (g - 1) \delta_2 + 2 \alpha e (e^2 + f^2)^{-1/2} +
(\beta + \gamma) f (e^2 + f^2)^{-1/2}
] \lambda_{min} \\
&\quad + \delta_1 + \delta_2 \lambda_{max}\lambda_{min} \\
&\quad + \Delta(e^2 + f^2)^{1/2} - (e^2+f^2)^{-1/2}e\Delta e - (e^2 + f^2)^{-1/2} f \Delta f \\
&\geq  -[ \delta_1 + (g - 1) \delta_2 + 2 \alpha e (e^2 + f^2)^{-1/2} +
(\beta + \gamma) f (e^2 + f^2)^{-1/2}
] \lambda_{min} \\
&\quad + \delta_1 + \delta_2 \lambda_{max}\lambda_{min}.
\end{align}
Since $\lambda_{min}(0,x) >  0$ by assumption that $\bs B_0$ is positive definite, the maximum principle implies that $\lambda_{min}(t,x) > 0$ on $[0,\infty)$. This proves that $\bs B$ is positive definite on $[0,\infty)$ and concludes the proof of Theorem \ref{t:main1}.
\qed

\appendix

\section{Existence of weak solutions}

In this appendix we prove the existence of weak solutions via an approximation scheme.

\subsection{Galerkin system}

An orthonormal basis for $L^2_{0,\div}$ is given by $$\{ n^{\perp} e^{2\pi i n\cdot x} / |n| \}_{n \in \bbZ^2 \backslash (0,0)}$$ where $n^\perp = -n_2  \bs i + n_1 \bs j$. We denote the projection of $\bs v \in L^2_{0,\div}$ onto
\begin{align*}
	\mbox{Span}\{ n^{\perp} e^{2\pi i n\cdot x} / |n| \}_{|n| \leq k}
\end{align*}
by $P_k \bs v$. We denote the projection of  $\bs B \in L^2$ onto
\begin{align}
	\mbox{Span}
		\Bigl \{
		\begin{pmatrix}
			1 & 0 \\
			0 & 0
		\end{pmatrix}
		e^{2\pi in \cdot x}, \frac{1}{\sqrt2}  \begin{pmatrix}
			1 & 0 \\
			0 & 1
		\end{pmatrix}
		e^{2\pi in \cdot x},
		\begin{pmatrix}
			0 & 0 \\
			0 & 1
		\end{pmatrix}
		e^{2\pi in \cdot x}
		\Bigr \}_{|n| \leq k}
	\end{align}
by $Q_k \bs B$.

For $\eps > 0$, $\bs A \in \bbR^{2 \times 2}$, $\bs A^T = \bs A$, we denote the minimal eigenvalue of $\bs A$ by $\Lambda(\bs A)$ and define
\begin{align}
	\rho_\eps(\bs A) = \frac{\max\{0, \Lambda(\bs A) - \eps \}}{\Lambda(\bs A)(1+ \eps|\bs A|^3)}.
\end{align}

Let $\eps > 0$, $k \in \bbN$. By an application of Carath\'{e}odory's theorem for ordinary differential equations, it follows that there exist $T^* \in (0,T]$ and unique
\begin{align}
	\bs v^k_{\eps} &\in 	\mbox{Span}\{ n^{\perp} e^{2\pi i n\cdot x} / |n| \}_{|n| \leq k},
	\\
	\bs B^k_{\eps} &\in
	\mbox{Span}
	\Bigl \{
	\begin{pmatrix}
		1 & 0 \\
		0 & 0
	\end{pmatrix}
	e^{2\pi in \cdot x}, \frac{1}{\sqrt2}  \begin{pmatrix}
		1 & 0 \\
		0 & 1
	\end{pmatrix}
	e^{2\pi in \cdot x},
	\begin{pmatrix}
		0 & 0 \\
		0 & 1
	\end{pmatrix}
	e^{2\pi in \cdot x}
	\Bigr \}_{|n| \leq k},
\end{align}
with absolutely continuous Fourier coefficients solving the following Galerkin system on $[0,T^*]$:
\begin{align}
\p_t \bs v^k_\eps &+ P_k(\bs v^k_\eps \cdot \nabla \bs v_\eps^k) - \Delta \bs v_\eps^k \\
&= 2a P_k [\div (\rho_\eps(\bs B^k_\eps) \bs S(\bs B^k_\eps) )
] + P_k \bs f, \label{eq:Galv}\\
\p_t \bs B^k_\eps &+ Q_k(\bs v^k_\eps \cdot \nabla \bs B_\eps^k) - \Delta \bs B_\eps^k \\
&=
a Q_k[\rho_\eps(\bs B^k_\eps) \bs D(\bs v^k_\eps) \bs B^k_\eps
+ \rho_\eps(\bs B^k_\eps) \bs B^k_\eps \bs D(\bs v^k_\eps) ] \\
&+  Q_k[\rho_\eps(\bs B^k_\eps) \bs W(\bs v^k_\eps) \bs B^k_\eps
- \rho_\eps(\bs B^k_\eps) \bs B^k_\eps \bs W(\bs v^k_\eps) ] \\
& - Q_k[\rho_\eps(\bs B^k_\eps)\bs R(\bs B^k_\eps) ] \label{eq:GalB},
\end{align}
with initial conditions
\begin{gather}
	\bs v^k_\eps(0,x) = P_k \bs v_0(x), \quad \bs B_\eps^k(0,x) =  Q_k \bs B_{0,\eps}(x), \\
	\bs B_{0,\eps}(x) =
	\begin{cases}
		\bs B_0(x) &\mbox{ if } \Lambda(\bs B_0(x)) > \eps, \\
		\bs I &\mbox{ otherwise}.
	\end{cases}
\end{gather}

\begin{lem}\label{l:Genergy}
	For all $\eps > 0$, $k \in \bbN$, there exist constants $C_0(\eps)$ and $C_1$ (absolute) such that the solution $(\bs v^k_\eps, \bs B^k_\eps)$ to the Galerkin system \eqref{eq:Galv}, \eqref{eq:GalB} satisfies
	\begin{align}
	\sup_{t \in [0,T^*]} ( \| \bs v^k_\eps(t)\|^2 &+ \| \bs B^k_{\eps}(t) \|^2 )
	+ \int_0^{T^*} (\| \nabla \bs v^k_\eps \|^2 + \| \nabla \bs B^k_\eps \|^2 ) dt \\
	&\leq C_0(\eps) + \| \bs v_0 \|^2 + \| \bs B_0 \|^2 + C_1 \int_0^{T^*} \| \bs f \|_{H^{-1}_{0,\div}}^2 dt. \label{eq:vBGen}
	\end{align}
	Moreover, denoting the right-hand side of \eqref{eq:vBGen} by $D$, there exists a constant $C_3(D,\eps)$ such that
	\begin{align}
		\int_0^{T^*} (\| \p_t \bs v^k_{\eps} \|^2_{H^{-1}_{0,\div}} +
		\| \p_t \bs B^k_{\eps} \|_{H^{-1}}^2 ) dt \leq C_3(D,\eps). \label{eq:dtGen}
	\end{align}
\end{lem}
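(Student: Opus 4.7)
The plan is to test the two Galerkin equations against $\bs v^k_\eps$ and $\bs B^k_\eps$ themselves, exploit the boundedness of the cutoff $\rho_\eps$ in a pointwise fashion, and conclude by Gronwall. The key property of $\rho_\eps$ is that for all symmetric $\bs A$,
\begin{align*}
|\rho_\eps(\bs A)\bs A|+|\rho_\eps(\bs A)\bs A^2|+|\rho_\eps(\bs A)\bs S(\bs A)|+|\rho_\eps(\bs A)\bs R(\bs A)| \leq C(\eps),
\end{align*}
since $\rho_\eps(\bs A)\leq (1+\eps|\bs A|^3)^{-1}$ whenever $\rho_\eps(\bs A)\neq 0$, and every quantity above grows at most like $|\bs A|^2$.

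First I would multiply \eqref{eq:Galv} by $\bs v^k_\eps$ and integrate over $\bbT^2$; since $\bs v^k_\eps$ lies in the range of $P_k$, the projector drops out, the divergence-free transport term vanishes by integration by parts, and one gets
\begin{align*}
\tfrac{1}{2}\tfrac{d}{dt}\|\bs v^k_\eps\|^2+\|\nabla\bs v^k_\eps\|^2
=-2a(\rho_\eps(\bs B^k_\eps)\bs S(\bs B^k_\eps),\nabla\bs v^k_\eps)+\langle P_k\bs f,\bs v^k_\eps\rangle.
\end{align*}
The first right-hand term is bounded by $C(\eps)\|\nabla\bs v^k_\eps\|\leq \tfrac{1}{4}\|\nabla\bs v^k_\eps\|^2+C(\eps)$ via Young, and the forcing by $C\|\bs f\|_{H^{-1}_{0,\div}}^2+\tfrac{1}{4}\|\nabla\bs v^k_\eps\|^2$. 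Similarly, pairing \eqref{eq:GalB} with $\bs B^k_\eps$ (the $Q_k$ drops out since $\bs B^k_\eps$ is in its range) and integrating by parts in the transport term yields
\begin{align*}
\tfrac{1}{2}\tfrac{d}{dt}\|\bs B^k_\eps\|^2+\|\nabla\bs B^k_\eps\|^2
=\text{coupling terms of the form }(\rho_\eps(\bs B^k_\eps)\bs B^k_\eps\nabla\bs v^k_\eps,\bs B^k_\eps)-(\rho_\eps\bs R(\bs B^k_\eps),\bs B^k_\eps),
\end{align*}
and every coupling term is bounded by $C(\eps)\|\nabla\bs v^k_\eps\|\|\bs B^k_\eps\|+C(\eps)\|\bs B^k_\eps\|$ using the pointwise bounds above, and hence by $\tfrac{1}{4}\|\nabla\bs v^k_\eps\|^2+C(\eps)(1+\|\bs B^k_\eps\|^2)$.

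Adding the two identities, absorbing the $\|\nabla\bs v^k_\eps\|^2$ terms on the left, and invoking Gronwall's inequality on $[0,T^*]$ yields \eqref{eq:vBGen}, with the $C(\eps)$ accumulated on the right giving the constant $C_0(\eps)$. For \eqref{eq:dtGen}, I would read off $\p_t\bs v^k_\eps$ and $\p_t\bs B^k_\eps$ directly from \eqref{eq:Galv}--\eqref{eq:GalB}, test against $\bs\varphi\in H^1_{0,\div}$ and $\bs E\in H^1$ respectively, and note that the orthogonal projections $P_k,Q_k$ commute with the Fourier multiplier used to define the $H^{-1}$ norms, so they are harmless. Each nonlinear term is then bounded in duality: the convection pairing $\langle\bs v^k_\eps\cdot\nabla\bs v^k_\eps,\bs\varphi\rangle=-\langle\bs v^k_\eps\otimes\bs v^k_\eps,\nabla\bs\varphi\rangle$ is controlled by $\|\bs v^k_\eps\|_{L^4}^2\|\nabla\bs\varphi\|\leq C\|\bs v^k_\eps\|\|\nabla\bs v^k_\eps\|\|\bs\varphi\|_{H^1}$ via Ladyzhenskaya \eqref{Lady}; the stress and reaction terms contribute $C(\eps)\|\bs\varphi\|_{H^1}$ via the pointwise bounds; the diffusion contributes $\|\nabla\bs v^k_\eps\|\|\bs\varphi\|_{H^1}$; and forcing contributes $\|\bs f\|_{H^{-1}_{0,\div}}\|\bs\varphi\|_{H^1}$. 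Squaring and integrating in time, everything is controlled by $D$ and $T^*$, yielding \eqref{eq:dtGen}.

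The main (mild) obstacle is purely bookkeeping: verifying that the $\eps$-dependent constants from the $\rho_\eps$ cutoff genuinely swallow the $\bs B$-dependent quadratic/cubic growth in $\bs S(\bs B), \bs R(\bs B)$ and the products $\bs B\bs D(\bs v), \bs B\bs W(\bs v)$. Once that pointwise boundedness is in hand, the estimates reduce to the familiar linear energy estimate plus Gronwall; no subtle cancellation from the objective derivative structure is required at this stage, because the cutoff makes every $\bs B$-nonlinearity into a bounded multiplier.
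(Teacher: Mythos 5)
Your proposal follows essentially the same route as the paper: pair \eqref{eq:Galv} with $\bs v^k_\eps$ and \eqref{eq:GalB} with $\bs B^k_\eps$, observe that the cutoff $\rho_\eps$ renders every $\bs B$-nonlinearity a bounded multiplier, absorb the velocity gradient via Young, and then estimate $\p_t\bs v^k_\eps$, $\p_t\bs B^k_\eps$ in duality using Ladyzhenskaya and the same pointwise bounds. One small remark: you invoke Gronwall because you bound the coupling term by $C(\eps)\|\nabla\bs v^k_\eps\|\|\bs B^k_\eps\|$, i.e.\ you use only $\rho_\eps(\bs A)|\bs A|\le C(\eps)$; the paper instead uses the stronger pointwise bound $\rho_\eps(\bs A)|\bs A|^2\le C(\eps)$ (which you in fact state at the outset), under which the coupling term $(\rho_\eps\bs B\bs D(\bs v),\bs B)$ is bounded directly by $C(\eps)\|\nabla\bs v^k_\eps\|$ with no residual factor of $\|\bs B^k_\eps\|$, and Gronwall is then unnecessary. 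Avoiding Gronwall is what yields the stated form \eqref{eq:vBGen} with coefficient exactly one on $\|\bs v_0\|^2+\|\bs B_0\|^2$ rather than an $e^{C(\eps)T^*}$ prefactor; your estimate still suffices for every downstream use, but is marginally weaker in its constant structure.
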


\begin{proof}
Pairing \eqref{eq:Galv} with $\bs v^k_\eps$ and \eqref{eq:GalB} with $\bs B^k_\eps$ and summing we obtain the identity
\begin{align}
	\frac{1}{2} \frac{d}{dt} ( \| \bs v^k_\eps \|^2 &+ \| \bs B^k_\eps \|^2 )
	+ \| \nabla \bs v^k_\eps \|^2 + \| \nabla \bs B^k_\eps \|^2 \\
	&= -2a ( \rho_\eps (\bs B^k_\eps) \bs S(\bs B^k_\eps), \bs D(\bs v^k_\eps)) +
	\ang{f, \bs v^k_\eps} \\
	&+ 2a [ (\rho_\eps (\bs B^k_\eps) \bs B^k_\eps \bs D(\bs v^k_\eps), \bs B^k_\eps)
	- ( \rho_\eps (\bs v^k_\eps) \bs R(\bs B^k_\eps), \bs B^k_\eps) ]. \label{eq:GalEn}
\end{align}
The definitions of $\rho_\eps$, $\bs S$, and $\bs R$ imply the estimate
\begin{align}
\rho_\eps(\bs B^k_\eps)[ |\bs S(\bs B^k_\eps)| + |\bs B^k_\eps|^2 +
|\bs B^k_\eps||\bs R(\bs B^k_\eps)| ] \leq C \frac{1 + |\bs B^k_\eps|^3}{1 +
\eps |\bs B^k_\eps|^3} \leq C_0(\eps). \label{eq:Nonlinest}
\end{align}
Integrating \eqref{eq:GalEn} from $0$ to $t\in [0,T^*]$ and
using \eqref{eq:Nonlinest} to bound the right-hand side of \eqref{eq:GalEn} we obtain the estimate \eqref{eq:vBGen}.

We now estimate $\p_t \bs v^k_\eps$ and $\p_t \bs B^k_\eps$.
Let $\bs w \in H^1_{0,\div}$. By H\"older's inequality, Sobolev embedding $H^{1/2} \hookrightarrow L^4$ and interpolation we obtain the estimates:
\begin{align}
|(P_k ((\bs v^k_\eps \cdot \nabla) \bs v^k_\eps), \bs w)| &=
|(\bs v^k_\eps, (\bs v^k_\eps \cdot \nabla) P_k \bs w)| \\
&\leq \| \bs v^k_\eps \|_{L^4}^2 \| \nabla \bs w \| \\
&\leq C \| \bs v^k_\eps \| \| \grad \bs v^k_\eps \| \| \grad \bs w \|, \\
|(P_k[\div(\rho_\eps(\bs B^k_\eps) \bs S(\bs B^k_\eps)  )], \bs w)| &
=  |(\rho_\eps(\bs B^k_\eps) \bs S(\bs B^k_\eps), \nabla P_k \bs w )| \\
&\leq C \int_{\bbT^2} (1 + |\bs B^k_\eps| + |\bs B^k_\eps|^2) |\nabla P_k \bs w| dx \\
&\leq C (1 + \| \bs B^k_\eps \| + \| \bs B^k_\eps \| \| \nabla \bs B^k_\eps \| ) \| \nabla w \|.
\end{align}
These estimates and \eqref{eq:Galv} imply
\begin{align*}
	\int_0^{T^*} \| \p_t \bs v \|^2_{H^{-1}_{0,\div}} dt \leq C_{7/3}(D).
\end{align*}
Similarly, if $\bs A \in H^{-1}$, we obtain
\begin{align}
	|(Q_k(
	(\bs v_\eps^k \cdot \nabla) \bs B^k_\eps), \bs A
	)| &\leq C \| \bs v^k_\eps \|^{1/2} \| \bs v^k_\eps \|^{1/2}
	\| \bs B^k_\eps \|^{1/2} \| \bs B^k_\eps \|^{1/2} \| \nabla \bs A \|, \\
	| (Q_k(
	\rho_\eps(\bs B^k_\eps) \bs R(\bs B^k_\eps)
	), \bs A
	)
	| &\leq C (1 + \| \bs B^k_\eps \| + \| \bs B^k_\eps \| \| \nabla \bs B^k_\eps \| ) \| \nabla \bs A \|,
\end{align}
and since $|\rho_\eps(\bs B^k_\eps)|\bs B_{\eps}^k| \leq C_{8/3}(\eps)$, we obtain
\begin{align}
|(a Q_k[
\rho_\eps(\bs B^k_\eps ) \bs D(\bs v^k_\eps ) \bs B^k_\eps &+
\rho_\eps(\bs B^k_\eps ) \bs B^k_\eps \bs D(\bs v^k_\eps )
],
\bs A) | \\
&\leq 2|a| C_{8/3}(\eps) \| \nabla \bs v^k_\eps \| \| \bs A \|, \\
|(a Q_k[
\rho_\eps(\bs B^k_\eps ) \bs W(\bs v^k_\eps ) \bs B^k_\eps &-
\rho_\eps(\bs B^k_\eps ) \bs B^k_\eps \bs W(\bs v^k_\eps )
],
\bs A) | \\
&\leq 2|a| C_{8/3}(\eps)  \| \nabla \bs v^k_\eps \| \| \bs A \|.
\end{align}
These estimates and \eqref{eq:GalB} imply that
\begin{align}
	\int_0^{T^*} (\| \p_t \bs v \|^2_{H^{-1}_{0,\div}} + \| \p_t \bs B^k_\eps \|^2_{H^{-1}}) dt \leq C_3(D,\eps),
\end{align}
completing the proof.
\end{proof}

\begin{lem}\label{l:klimit}
For all $\eps > 0$, there exist
\begin{align}
\bs v_\eps &\in L^\infty(0,T; L^2_{0,\div}) \cap L^2(0,T; H^1_{0,\div}) \cap C([0,T];
H^{-1}_{0,\div}), \\
\bs B_\eps &\in L^\infty(0,T; L^2) \cap L^2(0,T; H^1) \cap C([0,T];
H^{-1}),
\end{align}
such that $\bs v_{\eps}(0,x) = \bs v_0(x)$, $\bs B_\eps(0,x) = \bs B_{0,\eps}(x)$,
\begin{align}
	\p_t \bs v_{\eps} &\in L^2(0,T; H^{-1}_{0,\div}), \\
 	\p_t \bs B_{\eps} &\in L^2(0,T; H^{-1}),
\end{align}
for all $\bs \varphi \in L^2(0,T; H^1_{0,\div})$,
\begin{align}
	\int_0^T &[\ang{\p_t \bs v_{\eps}, \bs \varphi} + ((\bs v_\eps \cdot \nabla) \bs v_\eps, \bs \varphi) ] dt \\
	&= -\int_0^T (\nabla \bs v_{\eps}, \nabla \bs \varphi) - 2a ((\rho_{\eps}(\bs B_{\eps})) \bs S(\bs B_\eps), \bs D( \bs \varphi)) + \ang{\bs f, \bs \varphi} dt, \label{eq:epsv}
\end{align}
and for all $\bs A \in L^2(0,T;H^{1})$, we have
\begin{align}
	\int_0^T &[\ang{\p_t \bs B_{\eps}, \bs A } + ((\bs v_\eps \cdot \nabla ) \bs B_{\eps} + 2 \rho_{\eps}(\bs B_{\eps})\bs B\bs W(\bs v) - 2 a \rho(_{\eps}\bs B_{\eps})\bs B \bs D(\bs v), \bs A) ] dt \\
	&+ \int_0^T (\rho_{\eps}(\bs B_{\eps})\bs R(\bs B_{\eps}), \bs A) dt =
	- \int_0^T (\nabla \bs B, \nabla \bs A) dt. \label{eq:epsB}
\end{align}

Moreover, for almost every $(t,x) \in [0,T] \times \bbT^2$, the minimal eigenvalue of $\bs B_\eps$ satisfies
\begin{align}
	\Lambda(\bs B_{\eps}) \geq \eps. \label{eq:Bepslower}
\end{align}
\end{lem}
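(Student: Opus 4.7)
My plan is to pass to the limit $k \to \infty$ in the Galerkin system using the uniform bounds of Lemma~\ref{l:Genergy}, and separately establish the spectral bound \eqref{eq:Bepslower} by testing the $\bs B_\eps$-equation with the positive-semidefinite spectral part $(\eps \bs I - \bs B_\eps)_+$. First, the right-hand side of \eqref{eq:vBGen} is independent of the local existence time $T^*$, so the standard continuation argument extends each $(\bs v^k_\eps, \bs B^k_\eps)$ to all of $[0,T]$ with the stated bounds. Banach--Alaoglu together with \eqref{eq:dtGen} yields (a subsequence not relabeled and) limits $(\bs v_\eps, \bs B_\eps)$ with $\bs v^k_\eps \rightharpoonup \bs v_\eps$ weakly-$\ast$ in $L^\infty(0,T;L^2_{0,\div})$ and weakly in $L^2(0,T;H^1_{0,\div})$, and $\p_t \bs v^k_\eps \rightharpoonup \p_t \bs v_\eps$ in $L^2(0,T;H^{-1}_{0,\div})$, and analogously for $\bs B^k_\eps$. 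Aubin--Lions upgrades these to strong convergence in $L^2(0,T;L^2)$ and, along a further subsequence, to a.e.\ convergence on $[0,T] \times \bbT^2$. Since the limits lie in $C([0,T];H^{-1})$, the approximation properties $P_k \bs v_0 \to \bs v_0$ and $Q_k \bs B_{0,\eps} \to \bs B_{0,\eps}$ in $L^2$ give $\bs v_\eps(0) = \bs v_0$ and $\bs B_\eps(0) = \bs B_{0,\eps}$.

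To pass to the limit in the nonlinear terms, the key point is that by design of the cutoff,
\[
\rho_\eps(\bs A)\,|\bs S(\bs A)| + \rho_\eps(\bs A)\,|\bs A|^2 + \rho_\eps(\bs A)\,|\bs R(\bs A)|\,|\bs A| \le C(\eps), \qquad \bs A \in \Sym,
\]
so $\rho_\eps(\bs B^k_\eps)\bs S(\bs B^k_\eps)$, $\rho_\eps(\bs B^k_\eps)\bs B^k_\eps$ and $\rho_\eps(\bs B^k_\eps)\bs R(\bs B^k_\eps)$ are uniformly bounded in $L^\infty([0,T] \times \bbT^2)$ and, by the a.e.\ convergence of $\bs B^k_\eps$ and the continuity of $\rho_\eps$, converge a.e.\ to their expected limits; dominated convergence then gives strong convergence in every $L^p$, $p < \infty$. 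Combined with the strong $L^2(L^2)$ convergence of $\bs v^k_\eps, \bs B^k_\eps$ and the weak $L^2(L^2)$ convergence of $\nabla \bs v^k_\eps, \nabla \bs B^k_\eps$, every term in \eqref{eq:Galv}--\eqref{eq:GalB} passes to the limit against a smooth test function after commuting $P_k, Q_k$ onto the test side (where they converge to the identity); density then produces \eqref{eq:epsv}--\eqref{eq:epsB}.

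The hard part is the pointwise eigenvalue bound \eqref{eq:Bepslower}. I would test \eqref{eq:epsB} with $\bs A = (\eps\bs I - \bs B_\eps)_+$, which is symmetric-valued and $1$-Lipschitz in $\bs B_\eps$, and hence lies in $L^2(0,T;H^1)$ because $|(\eps\bs I - \bs B_\eps)_+| \le \eps + |\bs B_\eps|$ and the matrix inequality $|\nabla(\eps\bs I - \bs B_\eps)_+| \le |\nabla \bs B_\eps|$ holds pointwise. The decisive observation is that the support of $(\eps\bs I - \bs B_\eps)_+$ is contained in $\{\Lambda(\bs B_\eps) \le \eps\}$, which is precisely the set where $\rho_\eps(\bs B_\eps) \equiv 0$; consequently every $\rho_\eps$-term on the right-hand side of \eqref{eq:epsB} pairs to zero. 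Setting $\Psi(\bs B) := \tfrac12|(\eps\bs I - \bs B)_+|^2$, so that $\Psi$ is convex with $\Psi'(\bs B) = -(\eps\bs I - \bs B)_+$, the chain rule gives $\ang{\p_t \bs B_\eps, (\eps\bs I - \bs B_\eps)_+} = -\tfrac{d}{dt}\int_{\bbT^2}\Psi(\bs B_\eps)\,dx$, the transport term equals $-\int_{\bbT^2} \bs v_\eps \cdot \nabla \Psi(\bs B_\eps)\,dx = 0$ by $\div \bs v_\eps = 0$, and integration by parts together with the matrix Kato-type inequality $\nabla(\eps\bs I - \bs B)_+ : \nabla \bs B \le 0$ (a consequence of the operator convexity of $\lambda \mapsto \tfrac12(\eps - \lambda)_+^2$) makes the diffusion contribution non-negative. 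The net result is $\tfrac{d}{dt}\int_{\bbT^2}\Psi(\bs B_\eps)\,dx \le 0$; since $\Psi(\bs B_{0,\eps}) \equiv 0$ by the construction of $\bs B_{0,\eps}$, we conclude $\Psi(\bs B_\eps) \equiv 0$ and hence $\Lambda(\bs B_\eps) \ge \eps$ a.e. The main technical obstacle is rigorously justifying the chain rule and Kato inequality for the non-smooth function $(\cdot)_+$; I would handle this by approximating $\lambda \mapsto \tfrac12(\eps - \lambda)_+^2$ by smooth non-negative convex functions $\phi_\eta$, extending $\phi_\eta$ to $\Sym$ by spectral calculus, testing with $-\phi_\eta'(\bs B_\eps)$, obtaining the inequality at the approximate level, and passing to $\eta \to 0$ via dominated convergence, using the uniform bound $|(\eps\bs I - \bs B_\eps)_+| \le \eps + |\bs B_\eps| \in L^\infty(0,T;L^2)$.
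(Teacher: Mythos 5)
Your Galerkin limit passage is essentially the paper's (same convergences via Banach--Alaoglu and Aubin--Lions, same observation that $\rho_\eps(\cdot)\bs S(\cdot)$, $\rho_\eps(\cdot)\bs R(\cdot)$ and $\rho_\eps(\cdot)|\cdot|$ are uniformly bounded by $C(\eps)$). Where you depart is the proof of the eigenvalue bound \eqref{eq:Bepslower}. The paper tests \eqref{eq:epsB} with the rank-one, scalar-cut-off test function $\bs A = \chi_{[0,\tau]}\,(\bs B_\eps\bs z\cdot\bs z - \eps|\bs z|^2)_-\,(\bs z\otimes\bs z)$ for each fixed rational direction $\bs z$, and then lets $\bs z$ range over $\bbQ^2$. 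Because $\bs z\otimes\bs z$ is constant in $(t,x)$, the only nonlinearity in $\bs B_\eps$ is the \emph{scalar} negative part of the quadratic form $\bs B_\eps\bs z\cdot\bs z$; the chain rule, the $H^1$-membership of the test function, and the sign of the diffusion term are then all entirely classical scalar facts, and the term-killing identity $\rho_\eps(\bs B_\eps)\bs A \equiv 0$ follows from $\bs B_\eps\bs z\cdot\bs z < \eps|\bs z|^2 \Rightarrow \Lambda(\bs B_\eps) < \eps$. You instead test with the matrix-valued spectral function $(\eps\bs I - \bs B_\eps)_+$, which captures all directions at once, but then the $H^1$-membership, the chain rule for $\Psi(\bs B)=\tfrac12|(\eps\bs I-\bs B)_+|^2$, and the sign of $\nabla(\eps\bs I-\bs B_\eps)_+ : \nabla\bs B_\eps$ all require nontrivial facts from matrix spectral calculus (Frobenius-Lipschitz continuity of $1$-Lipschitz spectral functions, convexity of $\bs B\mapsto\tr\phi(\bs B)$ for convex scalar $\phi$). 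You correctly identify these as the technical heart of your variant and propose a reasonable regularization in the scalar profile; in effect you would be reproving, in the matrix setting, what the paper obtains for free by reducing to scalars direction by direction. One small terminology fix: what is needed is convexity of $\bs B\mapsto\tr\phi(\bs B)$, which holds whenever the scalar profile $\phi(\lambda)=\tfrac12(\eps-\lambda)_+^2$ is convex; this is \emph{not} ``operator convexity'' of $\phi$, a strictly stronger (and here false) property. Apart from that, your route is a valid alternative, just more machinery-heavy than the paper's.
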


\begin{proof}
For $\eps > 0$, $k \in \bbN$, let $(\bs v^k_{\eps}, \bs B^k_{\eps})$ be the solution to the Galerkin system \eqref{eq:Galv}, \eqref{eq:GalB}. By Lemma \ref{l:Genergy} and the Banach-Alaoglu theorem there exist subsequences (which we will not relabel) and $(\bs v_\eps, \bs B_{\eps})$ such that:
\begin{align}
\bs v^k_\eps &\overset{\ast}{\rightharpoonup} \bs v_\eps \quad \mbox{weakly in } L^\infty(0,T;L^2), \\
\bs v^k_\eps &\rightharpoonup \bs v_\eps \quad \mbox{weakly in } L^2(0,T;H^1_{0,\div}), \\
\p_t \bs v^k_\eps &\overset{\ast}{\rightharpoonup} \p_t \bs v_\eps \quad \mbox{weakly in } L^\infty(0,T;H^{-1}_{0,\div}), \\
\bs B^k_\eps &\overset{\ast}{\rightharpoonup} \bs B_\eps \quad \mbox{weakly in } L^\infty(0,T;L^2), \\
\bs B^k_\eps &\rightharpoonup \bs B_\eps \quad \mbox{weakly in } L^2(0,T;H^1), \\
\p_t \bs B^k_\eps &\overset{\ast}{\rightharpoonup} \p_t \bs B_\eps \quad \mbox{weakly in } L^\infty(0,T;H^{-1}).
\end{align}
By the Aubin--Lions lemma and extracting further subsequences if necessary, we have also
\begin{align}
\bs v^k_{\eps} &\rar \bs v_\eps \quad \mbox{strongly in }L^2(0,T;H^{1/2}_{0,\div}) \cap C([0,T]; H^{-1}_{0,\div}), \\
\bs B^k_{\eps} &\rar \bs B_\eps \quad \mbox{strongly in }L^2(0,T;H^{1/2}) \cap C([0,T]; H^{-1}), \\
\rho_{\eps}(\bs B^k_{\eps}) &\rar \rho_{\eps}(\bs B_{\eps}) \quad \mbox{a.e. on } [0,T] \times \bbT^2.
\end{align}
Using \eqref{eq:Galv}, \eqref{eq:GalB}, the previous convergence properties and standard arguments we can conclude that $(\bs v_\eps,\bs B_{\eps})$ satisfies \eqref{eq:epsv},  \eqref{eq:epsB} and
$\bs v_{\eps}(0,x) = \bs v_0(x)$, $\bs B_{\eps}(0,x) = \bs B_{\eps}(x)$.

The proof of \eqref{eq:Bepslower} is the same as the $3d$ case in \cite{BBJ21}, but we will briefly sketch it for completeness. Let $\bs z \in \bbQ^2$. In \eqref{eq:epsB} we choose
\begin{align}
	\bs A(t,x) = \chi_{[0,\tau]}(t)(\bs B_{\eps}(t,x) \bs z \cdot \bs z - \eps |\bs z|^2)_-(\bs z \otimes \bs z) \in L^2(0,T;H^1).
\end{align}
This choice of $\bs A$, and the facts that $\div \bs v_{\eps} = 0$ and $\rho_{\eps}(\bs B_{\eps})\bs A = \bs 0$ imply that the only nonzero terms appearing in \eqref{eq:epsB} are
\begin{align}
\int_0^T \ang{\p_t \bs B_{\eps}, \bs A} dt &=
\int_0^\tau \frac{d}{dt} \frac{1}{2} \| (\bs B_{\eps} \bs z \cdot \bs z - \eps |\bs z|^2)_- \|^2 dt \\
&= \frac{1}{2}  \| (\bs B_{\eps}(\tau) \bs z \cdot \bs z - \eps |\bs z|^2)_- \|^2
- \frac{1}{2}  \| (\bs B_{0,\eps} \bs z \cdot \bs z - \eps |\bs z|^2)_- \|^2 \\
&= \frac{1}{2}  \| (\bs B_{\eps}(\tau) \bs z \cdot \bs z - \eps |\bs z|^2)_- \|^2,
\end{align}
and
\begin{align}
\int_0^T (\nabla \bs B_{\eps}, \nabla \bs A) dt =
\int_0^\tau \| \nabla ( \bs B_{\eps} \bs z \cdot \bs z - \eps |\bs z|^2)_-\|^2 dt.
\end{align}
By \eqref{eq:epsB}
\begin{align}
\| (\bs B_{\eps}(\tau) \bs z \cdot \bs z - \eps |\bs z|^2)_- \|^2 = 0, \quad \tau \in [0,T].
\end{align}
Thus, $\bs B(t,x)\bs z \cdot \bs z \geq \eps |\bs z|^2$ almost everywhere on $[0,T] \times \bbT^2$.
 We conclude that for a.e. $(t,x) \in [0,T] \times \bbT^2$, for all $\bs z \in \bbR^2$, we have $\bs B(t,x)\bs z \cdot \bs z \geq \eps |\bs z|^2$, proving \eqref{eq:Bepslower}.
\end{proof}

\subsection{An energy identity}

We now derive an energy identity for the $\eps$-modified equations reminiscent of the energy balance identity \eqref{eq:energybalance} for the equations of motion.

\begin{lem}\label{l:epsest}
Let $\eps > 0$ and $(\bs v_{\eps}, \bs B_{\eps})$ be as in Lemma \ref{l:klimit}. Then for all $\tau \in [0,T]$,
\begin{align}
\frac{1}{2} & \| \bs v_{\eps}(\tau) \|^2 + \int_{\bbT^2} \psi(\bs B_{\eps}(\tau)) dx +
\int_0^\tau \| \nabla \bs v_{\eps} \|^2 dt  \\
&+  \int_0^\tau [ (1-\beta) \| \bs B_{\eps}^{-1/2} \nabla \bs B_{\eps} \bs B_{\eps}^{-1/2} \|^2
+ \beta \| \nabla \bs B_{\eps} \|^2 ] dt \\
&+ \int_0^\tau (1-\beta) \delta_1 \|\rho_{\eps}(\bs B_{\eps})^{1/2} (\bs B_{\eps}^{1/2} - \bs B_{\eps}^{-1/2}) \|^2 dt \\
&+
\int_0^\tau \beta \delta_2 \|\rho_{\eps}(\bs B_{\eps})^{1/2} (\bs B_{\eps}^{3/2} - \bs B_{\eps}^{1/2}) \|^2
 dt \\
&+ \int_0^\tau (\beta \delta_1 + (1-\beta) \delta_2) \| \rho_{\eps}(\bs B_{\eps})^{1/2}(\bs B_{\eps} - \bs I) \|^2 dt
\\& = \frac{1}{2} \| \bs v_0 \|^2 + \int_{\bbT^2} \psi(\bs B_{0,\eps}) dx
+ \int_0^\tau \ang{\bs f, \bs v} dt \label{eq:epsid}.
\end{align}
\end{lem}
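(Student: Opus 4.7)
The strategy is to derive \eqref{eq:epsid} by testing the two weak equations of Lemma \ref{l:klimit} against $\chi_{[0,\tau]} \bs v_\eps$ in \eqref{eq:epsv} and against $\chi_{[0,\tau]} \bs J_\eps$ in \eqref{eq:epsB}, where $\bs J_\eps := (1-\beta)(\bs I - \bs B_\eps^{-1}) + \beta(\bs B_\eps - \bs I) = \partial_{\bs B} \psi(\bs B_\eps)$ is the thermodynamic potential conjugate to $\bs B_\eps$. This is the natural analogue of the formal energy calculation sketched in the introduction; the key point making it rigorous at the $\eps$-level is the a.e. bound $\Lambda(\bs B_\eps) \geq \eps$ from Lemma \ref{l:klimit}, which yields $|\bs B_\eps^{-1}| \leq 1/\eps$ and places $\bs J_\eps$ in $L^2(0,T;H^1) \cap L^\infty(0,T;L^2)$, so it is an admissible test function in \eqref{eq:epsB}.

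Testing \eqref{eq:epsv} with $\bs v_\eps$ is routine: incompressibility kills the convective term and one reads off the kinetic energy identity containing the cross-coupling $2a(\rho_\eps(\bs B_\eps) \bs S(\bs B_\eps), \bs D(\bs v_\eps))$. Testing \eqref{eq:epsB} with $\bs J_\eps$ requires interpreting $\int_0^\tau \langle \partial_t \bs B_\eps, \bs J_\eps\rangle\,dt$ as $\int_{\bbT^2} \psi(\bs B_\eps(\tau))\,dx - \int_{\bbT^2} \psi(\bs B_{0,\eps})\,dx$, and this chain rule is the main technical obstacle. I would prove it by time-mollification $\bs B_\eps \mapsto \bs B_\eps^\sigma$: since the set $\{\bs A \in \Sym : \Lambda(\bs A) \geq \eps\}$ is convex, the mollification preserves the eigenvalue bound pointwise and keeps $\psi$ smooth and bounded along the mollified curve; the pointwise chain rule is then trivial, and one passes $\sigma \to 0$ using the $L^2(0,T;H^1) \times L^2(0,T;H^{-1})$ duality together with $\bs B_\eps \in C([0,T]; L^2)$. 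The convective term vanishes by incompressibility, since $((\bs v_\eps \cdot \nabla)\bs B_\eps, \bs J_\eps) = \int \bs v_\eps \cdot \nabla \psi(\bs B_\eps)\,dx = 0$.

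The remaining terms reduce to trace-algebra identities exploiting that $\bs B_\eps$ and $\bs J_\eps$ commute, so that $\bs B_\eps \bs J_\eps = \bs S(\bs B_\eps)$ is symmetric. Cyclicity of the trace then immediately gives the vanishing of the vorticity cross term, $(\rho_\eps \bs B_\eps \bs W(\bs v_\eps), \bs J_\eps) = \int \rho_\eps \tr(\bs S(\bs B_\eps) \bs W(\bs v_\eps))\,dx = 0$, and the matching of the strain cross term with the one in the momentum identity, $(\rho_\eps \bs B_\eps \bs D(\bs v_\eps), \bs J_\eps) = (\rho_\eps \bs S(\bs B_\eps), \bs D(\bs v_\eps))$. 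The factorizations $\bs B_\eps - \bs I = \bs B_\eps^{1/2}(\bs B_\eps^{1/2} - \bs B_\eps^{-1/2})$ and $\bs B_\eps^2 - \bs B_\eps = \bs B_\eps^{1/2}(\bs B_\eps^{3/2} - \bs B_\eps^{1/2})$ (valid because $\bs B_\eps$ is symmetric positive-definite pointwise) produce $(\rho_\eps \bs R(\bs B_\eps), \bs J_\eps)$ in exactly the three $\rho_\eps$-weighted $L^2$ forms appearing on the left-hand side of \eqref{eq:epsid}. Finally, using $\partial_k \bs B_\eps^{-1} = -\bs B_\eps^{-1}(\partial_k \bs B_\eps) \bs B_\eps^{-1}$ and cyclicity yields $(\nabla \bs B_\eps, \nabla \bs J_\eps) = (1-\beta) \|\bs B_\eps^{-1/2} \nabla \bs B_\eps \bs B_\eps^{-1/2}\|^2 + \beta \|\nabla \bs B_\eps\|^2$.

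Adding the two tested identities cancels the $2a(\rho_\eps \bs S(\bs B_\eps), \bs D(\bs v_\eps))$ cross terms and reproduces \eqref{eq:epsid} exactly. The principal technical step is the chain rule in the second paragraph; every other manipulation is direct trace algebra, which is available precisely thanks to the pointwise eigenvalue lower bound.
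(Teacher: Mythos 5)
Your proposal is correct and follows essentially the same route as the paper: test \eqref{eq:epsv} with $\chi_{[0,\tau]}\bs v_\eps$ and \eqref{eq:epsB} with $\chi_{[0,\tau]}\bs J_\eps$ (admissible thanks to $\Lambda(\bs B_\eps)\geq\eps$), use $\bs B_\eps\bs J_\eps=\bs S(\bs B_\eps)=\bs J_\eps\bs B_\eps$ and incompressibility to evaluate the various pairings, and add the two identities to cancel the $2a(\rho_\eps\bs S(\bs B_\eps),\bs D(\bs v_\eps))$ cross terms. The only place you go beyond the paper is in spelling out the chain rule $\int_0^\tau\langle\partial_t\bs B_\eps,\bs J_\eps\rangle\,dt=\int\psi(\bs B_\eps(\tau))-\int\psi(\bs B_{0,\eps})$ via time-mollification together with the observation that $\{\bs A\in\Sym:\Lambda(\bs A)\geq\eps\}$ is convex, where the paper simply asserts the identity; this is a welcome elaboration, not a different method.
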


\begin{proof}
By \eqref{eq:Bepslower} it follows that for a.e. $(t,x) \in [0,T] \times \bbT^2$, $\bs B^{-1}_\eps$ exists and
\begin{align*}
	|\bs B^{-1}_{\eps}(t,x)| \leq \frac{1}{\eps}.
\end{align*}
Then $\nabla \bs B^{-1}_\eps = -\bs B_{\eps}^{-1} \nabla \bs B_{\eps} \bs B_{\eps}^{-1}$ implies
\begin{align}
	\int_0^T \| \nabla \bs B^{-1}_{\eps} \|^2 dt \leq \frac{1}{\eps^4}
	\int_0^T \| \nabla \bs B_{\eps} \|^2 dt. \label{eq:Bepsinvest}
\end{align}

Let
\begin{align*}
\bs J_{\eps} = (1-\beta)(\bs I - \bs B_{\eps}^{-1}) + \beta (\bs B_{\eps}  - I).
\end{align*}
By \eqref{eq:Bepsinvest}, $\bs J_{\eps} \in L^2(0,T; H^1)$. Since $\frac{\p \psi(\bs B_{\eps})}{\p \bs B_{\eps}} = \bs J_{\eps}$ and $\div \bs v_{\eps} = 0$, one readily verifies the identities
\begin{align}
\ang{\p_t \bs B_{\eps}, \bs J_{\eps}} &= \frac{d}{dt} \int_{\bbT^2} \psi(\bs B_{\eps}) dx, \\
((\bs v_{\eps} \cdot \nabla) \bs B_{\eps}, \bs J_{\eps}) &=
\int_{\bbT^2} \bs v_{\eps} \cdot \nabla \psi(\bs B_{\eps}) dx = 0, \\
(\nabla \bs B_{\eps}, \nabla \bs J_{\eps}) &=
(1-\beta) \| \bs B_{\eps}^{-1/2} \nabla \bs B_{\eps} \bs B_{\eps}^{-1/2} \|^2
+ \beta \| \nabla \bs B_{\eps} \|^2, \\
(\rho_\eps(\bs B_{\eps}) \bs R(\bs B_{\eps}), \bs J_{\eps} ) &=
 (1-\beta) \delta_1 \|\rho_{\eps}(\bs B_{\eps})^{1/2} (\bs B_{\eps}^{1/2} - \bs B_{\eps}^{-1/2}) \|^2 \\
&+
\beta \delta_2 \|\rho_{\eps}(\bs B_{\eps})^{1/2} (\bs B_{\eps}^{3/2} - \bs B_{\eps}^{1/2}) \|^2
 \\
&+ (\beta \delta_1 + (1-\beta) \delta_2) \| \rho_{\eps}(\bs B_{\eps})^{1/2}(\bs B_{\eps} - \bs I) \|^2.
\end{align}
Since $$\bs B_{\eps} \bs J_{\eps} = \bs S(\bs B_{\eps}) = \bs J_{\eps} \bs B_{\eps}$$ we also have
\begin{align}
2(\rho_{\eps}(\bs B_{\eps}) \bs B_{\eps} \bs W(\bs v_{\eps}), \bs J_{\eps}) &= 0, \\
2a (\rho_{\eps}(\bs B_{\eps}) \bs B_{\eps} \bs D(\bs v_{\eps}), \bs J_{\eps}) &=
2a (\rho_{\eps}(\bs B_{\eps}) \bs  D(\bs v_{\eps}), \bs S(\bs B_{\eps}).
\end{align}
Using the previous identities and \eqref{eq:epsB} with $\bs A = \chi_{[0,\tau]}\bs J_{\eps}$ we obtain the identity
\begin{align}
	 \int_{\bbT^2} \psi&(\bs B_{\eps}(\tau)) dx
	+  \int_0^\tau [ (1-\beta) \| \bs B_{\eps}^{-1/2} \nabla \bs B_{\eps} \bs B_{\eps}^{-1/2} \|^2
	+ \beta \| \nabla \bs B_{\eps} \|^2 ] dt \\
	&+ \int_0^\tau  (1-\beta) \delta_1 \|\rho_{\eps}(\bs B_{\eps})^{1/2} (\bs B_{\eps}^{1/2} - \bs B_{\eps}^{-1/2}) \|^2 dt \\
	&+
	\int_0^\tau \beta \delta_2 \|\rho_{\eps}(\bs B_{\eps})^{1/2} (\bs B_{\eps}^{3/2} - \bs B_{\eps}^{1/2}) \|^2
	 dt \\
	&+ \int_0^\tau (\beta \delta_1 + (1-\beta) \delta_2) \| \rho_{\eps}(\bs B_{\eps})^{1/2}(\bs B_{\eps} - \bs I) \|^2 dt
	\\& =  \int_{\bbT^2} \psi(\bs B_{0,\eps}) dx +
	\int_0^\tau 2a (\rho_{\eps}(\bs B_{\eps}) \bs S(\bs B_{\eps}), \bs D(\bs v_{\eps}) dt. \label{eq:Bepsid}
\end{align}
Using \eqref{eq:epsv} with $\bs \varphi = \chi_{[0,\tau]} \bs v_{\eps}$ we obtain
\begin{align}
\frac{1}{2} \| \bs v_{\eps} \|^2 &+ \int_0^\tau \| \nabla \bs v_{\eps} \|^2 dt \\
&= \frac{1}{2} \| \bs v_0 \|^2 - \int_0^\tau 2a (\rho_{\eps}(\bs B_{\eps}) \bs S(\bs B_{\eps}), \bs D(\bs v_{\eps})). \label{eq:vepsid}
\end{align}
Adding \eqref{eq:Bepsid} to \eqref{eq:vepsid} yields \eqref{eq:epsid}.
\end{proof}

\subsection{Conclusion of the proof of Theorem \ref{t:main2}}

We now conclude the proof of Theorem \ref{t:main2} by establishing the existence of a weak solution. For $\eps >  0$, let $(\bs v_\eps, \bs B_\eps)$ be as in Lemma \ref{l:klimit}. By the definition of $\bs B_{0,\eps}$, the fact $\psi(\bs I) = 0$ and Lemma \ref{l:epsest} we conclude for all $\tau \in [0,T]$
\begin{align}
	\frac{1}{2} \sup_{\tau \in [0,T]} \| \bs v_\eps(\tau) \|^2& +\sup_{\tau \in [0,T]}
	\int_{\bbT^2} \psi(\bs B_\eps(\tau)) dx + \frac{1}{2} \int_0^T ( \| \nabla \bs v_\eps \|^2 +
	\| \nabla \bs B_\eps \|^2 ) dt \\
	& \leq \frac{1}{2} \| \bs v_0 \|^2 + \int_{\bbT^2} \psi(\bs B_0) dx + C \int_0^T \| \bs f \|^2_{H^{-1}_{0,\div}} dt \label{eq:epsest}
\end{align}
where $C$ is an absolute constant.
Note that since $\int_{\bbT^2} |\log \det \bs B_0| dx < \infty$, the right-hand side of \eqref{eq:epsest} is finite and independent of $\eps$. Using \eqref{eq:epsv}, \eqref{eq:epsB}, \eqref{eq:epsest} and arguing as in the proof of \eqref{eq:dtGen} we obtain
\begin{align}
\| \p_t \bs v_{\eps} \|_{L^2 H^{-1}_{0,\div}} &\leq B(\bs v_0, \bs B_0, \bs f), \label{eq:dtveps} \\
\| \p_t \bs B_{\eps} \|_{[L^2 H^1 \cap L^4L^4]^*} &\leq B(\bs v_0, \bs B_0, \bs f),  \label{eq:dtBeps} \\
\forall \delta > 0, \quad \| \p_t \bs B_{\eps} \|_{L^2 H^{-1-\delta}} &\leq B(\bs v_0, \bs B_0, \bs f, \delta). \label{eq:dtBepsdelta}
\end{align}
For example, to establish \eqref{eq:dtBeps} we estimate the pairing
\begin{align}
	\int_0^T (\bs D(\bs v_\eps ) \bs B_\eps +&
	 \bs B_\eps \bs D(\bs v_\eps )
	,
	\bs A) dt \\ &\leq C
	\| \bs v_{\eps} \|_{L^2 H^1} \| \bs B_\eps \|_{L^4 L^4} \| \| \bs A \|_{L^4 L^4} \\
	&\leq C \| \bs v_{\eps} \|_{L^2 H^1} \| \bs B_\eps \|_{L^\infty L^2}^{1/2} \| \bs B_\eps \|_{L^2 H^1}^{1/2}\| \| \bs A \|_{L^4 L^4},
\end{align}
and to establish \eqref{eq:dtBepsdelta} we use Sobolev embedding $H^{1+\delta} \hookrightarrow L^\infty$ to estimate the pairing
\begin{align}
	\int_0^T (\bs D(\bs v_\eps ) \bs B_\eps +&
	\bs B_\eps \bs D(\bs v_\eps )
	,
	\bs A) dt \\ &\leq C(\delta)
	\| \bs v_{\eps} \|_{L^2 H^1} \| \bs B_\eps \|_{L^\infty L^2} \| \| \bs A \|_{L^2 H^{1+\delta}}.
\end{align}

By \eqref{eq:epsest}, \eqref{eq:dtveps}, \eqref{eq:dtBeps}, \eqref{eq:dtBepsdelta} and the Banach-Alaoglu theorem there exist $\eps_j \rar 0$ and $(\bs v, \bs B)$ such that:
\begin{align}
	\bs v_{\eps_j} &\overset{\ast}{\rightharpoonup} \bs v \quad \mbox{weakly in } L^\infty(0,T;L^2), \\
	\bs v_{\eps_j} &\rightharpoonup \bs v \quad \mbox{weakly in } L^2(0,T;H^1_{0,\div}), \\
	\p_t \bs v_{\eps_j} &\overset{\ast}{\rightharpoonup} \p_t \bs v \quad \mbox{weakly in } L^\infty(0,T;H^{-1}_{0,\div}), \\
	\bs B_{\eps_j} &\overset{\ast}{\rightharpoonup} \bs B \quad \mbox{weakly in } L^\infty(0,T;L^2), \\
	\bs B_{\eps_j} &\rightharpoonup \bs B \quad \mbox{weakly in } L^2(0,T;H^1), \\
	\p_t \bs B_{\eps_j} &\overset{\ast}{\rightharpoonup} \p_t \bs B \quad \mbox{weakly in } [L^2(0,T;H^1) \cap L^4(0,T; L^4)]^*, \\
	\forall \delta >  0, \, \p_t \bs B_{\eps_j} &\overset{\ast}{\rightharpoonup} \p_t \bs B \quad \mbox{weakly in } L^\infty(0,T;H^{-1-\delta}).
\end{align}
We note that \eqref{eq:epsest} and these weak convergence properties imply that
\begin{align}
	\| \bs v \|_{L^\infty L^2} &+ \| \bs B \|_{L^\infty L^2} +
	\| \nabla \bs v \|_{L^2 L^2} + \| \nabla \bs B \|_{L^2 L^2} \\
	&\leq C_0 \bigl (\| \bs v_0 \|, \| \psi(\bs B_0) \|_{L^1}, \| \bs f \|_{L^2 H^{-1}_{0,\div}} \bigr ) \label{eq:vBenergyest}.
\end{align}
By the Aubin--Lions lemma and extracting further subsequences if necessary, we have also
\begin{align}
	\bs v_{\eps_j} &\rar \bs v \quad \mbox{strongly in }L^2(0,T;H^{1/2}_{0,\div}) \cap C([0,T]; H^{-1}_{0,\div}), \label{eq:vstrong}\\
	\bs B_{\eps_j} &\rar \bs B \quad \mbox{strongly in }L^2(0,T;H^{1/2}) \cap C([0,T]; H^{-1-\delta}), \, \forall \delta > 0, \label{eq:Bstrong} \\
	\bs B_{\eps_j} &\rar \bs B \quad \mbox{a.e. on } [0,T] \times \bbT^2.
\end{align}

By \eqref{eq:Bepslower} and the continuity of $\Lambda(\cdot)$, $\Lambda(\bs B) \geq 0$ a.e. on $[0,T]\times \bbT^2$. In particular, $\det \bs B_0 \geq 0$ a.e., and by continuity of $\psi(\cdot)$,
\begin{align}
	\psi(\bs B_{\eps_j}) \rar \psi(\bs B) \in [0,\infty]
\end{align}
a.e. on $[0,T] \times \bbT^2$. Fatou's lemma and \eqref{eq:epsest} then imply that
\begin{align}
	\| \psi(\bs B) \|_{L^\infty L^1} < \infty. \label{eq:psibd}
\end{align}
Since $\Lambda(\bs B) = 0$ on a subset of $[0,T] \times \bbT^2$ with positive measure would imply that $\psi(\bs B) = \infty$ on a subset with positive measure, we conclude from \eqref{eq:psibd} that
\begin{align}
	\Lambda(\bs B) > 0 \mbox{ a.e. on } [0,T] \times \bbT^2.
\end{align}
Thus, by the continuity of $\Lambda(\cdot)$,
\begin{align}
	\rho_{\eps_j}(\bs B_{\eps_j}) \rar 1 \mbox{ a.e. on } [0,T] \times \bbT^2.
\end{align}
Using \eqref{eq:epsv}, \eqref{eq:epsB}, the previous convergence results and standard arguments we conclude that $(\bs v, \bs B)$ verify \eqref{eq:veq} and \eqref{eq:Beq}. Finally, the additional regularity $\bs v \in C([0,T]; L^2_{0,\div})$ and $\bs B \in C([0,T]; L^2)$ follow immediately from the facts that
\begin{align}
	\bs v \in L^2(0,T; H^1_{0,\div}), \p_t \bs v \in L^2(0,T; H^{-1}_{0,\div})
\end{align}
and
\begin{align}
	\bs B &\in L^2(0,T; H^1)\cap L^\infty(0,T; L^2), \\
	\p_t \bs B
	&\in [L^2(0,T; H^1) \cap L^4(0,T;L^4)]^*.
\end{align}

\qed

\bigskip

\noindent{\bf Acknowledgement.} Miroslav Bul\'\i{}\v cek and Josef M\'{a}lek acknowledge the support of the project No. 20-11027X financed by Czech science foundation (GACR). Miroslav Bul\'\i{}\v cek and Josef M\'{a}lek are members of the Ne\v{c}as center for mathematical modelling.


\bigskip

\centerline{\scshape Miroslav Bul\'\i{}\v{c}ek}
\smallskip
{\footnotesize
 \centerline{Charles University, Faculty of Mathematics and Physics, Mathematical Institute,}
 \centerline{
 Sokolovsk\' a 83, 18675 Prague 8, Czech Republic}
\centerline{\email{mbul8060@karlin.mff.cuni.cz}}
}
\bigskip

\centerline{\scshape Josef M\'{a}lek}
\smallskip
{\footnotesize
 \centerline{Charles University, Faculty of Mathematics and Physics, Mathematical Institute,}
 \centerline{Sokolovsk\' a 83, 18675 Prague 8, Czech Republic}
\centerline{\email{mbul8060@karlin.mff.cuni.cz}}
}
\bigskip

\centerline{\scshape Casey Rodriguez}
\smallskip
{\footnotesize
 \centerline{Department of Mathematics, University of North Carolina,}
 \centerline{Chapel Hill, NC, USA}
\centerline{\email{crodrig@email.unc.edu}}
}

\end{document}